\documentclass[12pt]{amsart}

\title[The least weakly compact can be unfoldable and more]{The least weakly compact cardinal can be unfoldable, weakly measurable and nearly $\theta$-supercompact}

\author[Cody]{Brent Cody}
 \address{B. Cody, The Fields Institute for Research in Mathematical Sciences, 222 College Street, Toronto, Ontario M5T 3J1, Canada}
 \email{bcody@fields.utoronto.ca}
 \urladdr{http://www.fields.utoronto.ca/~bcody/}

\author[Gitik]{Moti Gitik}
 \address{M. Gitik, Department of Mathematics, Tel Aviv University, 69978 Tel Aviv, Israel}
 \email{gitik@post.tau.ac.il}
 \urladdr{http://www.math.tau.ac.il/~gitik}

\author[Hamkins]{Joel David Hamkins}
 \address{J. D. Hamkins, Mathematics, The Graduate Center of The City University of New York,
         365 Fifth Avenue, New York, NY 10016 \&
         Mathematics, College of Staten Island of CUNY, Staten Island, NY 10314}
 \email{jhamkins@gc.cuny.edu}
 \urladdr{http://jdh.hamkins.org}
 \thanks{The research of the second author was partially supported by ISF Grant 234/08. The research of the third author has been supported in part by NSF grant DMS-0800762, PSC-CUNY grant 64732-00-42 and Simons Foundation grant 209252. Commentary concerning this paper can be made at \url{http://jdh.hamkins.org/least-weakly-compact}.}

\author[Schanker]{Jason A. Schanker}
 \address{J. A. Schanker, Mathematics and Computer Science, Manhattanville College, 2900 Purchase St., Purchase, NY 10577}
 \email{jason.schanker@mville.edu}

\usepackage{amssymb,bbm}
%
%
%
%
\newtheorem{theorem}{Theorem}
\newtheorem{maintheorem}[theorem]{Main Theorem}
\newtheorem{corollary}[theorem]{Corollary}

\newtheorem{lemma}[theorem]{Lemma}

\newcommand{\QED}{\end{proof}}

\def\proclaim[#1]{{\bf #1}}
\def\BF#1.{{\bf #1.}}
\newcommand{\url}[1]{{\tt #1}}

%
%

%
%


\renewcommand{\P}{{\mathbb P}}
\newcommand{\Q}{{\mathbb Q}}

\newcommand{\R}{{\mathbb R}}


\newcommand{\Gtail}{{G_{\!\scriptscriptstyle\rm tail}}}

\newcommand{\Ptail}{{\P_{\!\scriptscriptstyle\rm tail}}}

\newcommand{\Adot}{{\dot A}}


\newcommand{\Qdot}{{\dot\Q}}

%
%

%
%

\newcommand{\of}{\subseteq}

\newcommand{\set}[1]{\{\,{#1}\,\}}

\newcommand{\Add}{\mathop{\rm Add}}
\newcommand{\Aut}{\mathop{\rm Aut}}

\newcommand{\image}{\mathbin{\hbox{\tt\char'42}}}

\newcommand{\restrict}{\upharpoonright} 
\newcommand{\satisfies}{\models}



\newcommand{\intersect}{\cap}

\newcommand{\smalllt}{\mathrel{\mathchoice{\raise2pt\hbox{$\scriptstyle<$}}{\raise1pt\hbox{$\scriptstyle<$}}{\raise0pt\hbox{$\scriptscriptstyle<$}}{\scriptscriptstyle<}}}
\newcommand{\smallleq}{\mathrel{\mathchoice{\raise2pt\hbox{$\scriptstyle\leq$}}{\raise1pt\hbox{$\scriptstyle\leq$}}{\raise1pt\hbox{$\scriptscriptstyle\leq$}}{\scriptscriptstyle\leq}}}

\newcommand{\ltkappa}{{{\smalllt}\kappa}}
\newcommand{\leqkappa}{{{\smallleq}\kappa}}

\newcommand{\leqgamma}{{{\smallleq}\gamma}}

\newcommand{\ltlambda}{{{\smalllt}\lambda}}
\newcommand{\ltgamma}{{{\smalllt}\gamma}}

\newcommand{\leqtheta}{{{\smallleq}\theta}}

\newcommand{\boolval}[1]{\mathopen{\lbrack\!\lbrack}\,#1\,\mathclose{\rbrack\!\rbrack}}
\def\[#1]{\boolval{#1}}
\newbox\gnBoxA
\newdimen\gnCornerHgt
\setbox\gnBoxA=\hbox{\tiny$\ulcorner$}
\global\gnCornerHgt=\ht\gnBoxA
\newdimen\gnArgHgt
\def\gcode #1{%
\setbox\gnBoxA=\hbox{$#1$}%
\gnArgHgt=\ht\gnBoxA%
\ifnum     \gnArgHgt<\gnCornerHgt \gnArgHgt=0pt%
\else \advance \gnArgHgt by -\gnCornerHgt%
\fi \raise\gnArgHgt\hbox{\tiny$\ulcorner$} \box\gnBoxA %
\raise\gnArgHgt\hbox{\tiny$\urcorner$}}
\newcommand{\UnderTilde}[1]{{\setbox1=\hbox{$#1$}\baselineskip=0pt\vtop{\hbox{$#1$}\hbox to\wd1{\hfil$\sim$\hfil}}}{}}
\newcommand{\Undertilde}[1]{{\setbox1=\hbox{$#1$}\baselineskip=0pt\vtop{\hbox{$#1$}\hbox to\wd1{\hfil$\scriptstyle\sim$\hfil}}}{}}
\newcommand{\undertilde}[1]{{\setbox1=\hbox{$#1$}\baselineskip=0pt\vtop{\hbox{$#1$}\hbox to\wd1{\hfil$\scriptscriptstyle\sim$\hfil}}}{}}
\newcommand{\UnderdTilde}[1]{{\setbox1=\hbox{$#1$}\baselineskip=0pt\vtop{\hbox{$#1$}\hbox to\wd1{\hfil$\approx$\hfil}}}{}}
\newcommand{\Underdtilde}[1]{{\setbox1=\hbox{$#1$}\baselineskip=0pt\vtop{\hbox{$#1$}\hbox to\wd1{\hfil\scriptsize$\approx$\hfil}}}{}}

\newcommand{\st}{\mid}
\renewcommand{\th}{{\hbox{\scriptsize th}}}

\renewcommand{\iff}{\mathrel{\leftrightarrow}}

\newcommand{\iso}{\cong}
\def\<#1>{\langle#1\rangle}

\newcommand{\cp}{\mathop{\rm cp}}

\newcommand{\Ord}{\mathop{{\rm Ord}}}

\newcommand{\ZFC}{{\rm ZFC}}

\newcommand{\GCH}{{\rm GCH}}

%
%

\newcommand{\cell}[1]{\boxit{\hbox to 17pt{\strut\hfil$#1$\hfil}}}
\newcommand{\head}[2]{\lower2pt\vbox{\hbox{\strut\footnotesize\it\hskip3pt#2}\boxit{\cell#1}}}
\newcommand{\boxit}[1]{\setbox4=\hbox{\kern2pt#1\kern2pt}\hbox{\vrule\vbox{\hrule\kern2pt\box4\kern2pt\hrule}\vrule}}
\newcommand{\Col}[3]{\hbox{\vbox{\baselineskip=0pt\parskip=0pt\cell#1\cell#2\cell#3}}}
\newcommand{\tapenames}{\raise 5pt\vbox to .7in{\hbox to .8in{\it\hfill input: \strut}\vfill\hbox to
.8in{\it\hfill scratch: \strut}\vfill\hbox to .8in{\it\hfill output: \strut}}}
\newcommand{\Head}[4]{\lower2pt\vbox{\hbox to25pt{\strut\footnotesize\it\hfill#4\hfill}\boxit{\Col#1#2#3}}}
\newcommand{\Dots}{\raise 5pt\vbox to .7in{\hbox{\ $\cdots$\strut}\vfill\hbox{\ $\cdots$\strut}\vfill\hbox{\
$\cdots$\strut}}}
%
%
%
%
\newcommand{\df}{\it} 
\hyphenation{su-per-com-pact-ness}\hyphenation{La-ver}

\renewcommand{\S}{\mathbb{S}}

\newcommand{\tail}{\mathrm{tail}}
\begin{document}

\begin{abstract}
 We prove from suitable large cardinal hypotheses that the least weakly compact cardinal can be unfoldable, weakly measurable and even nearly $\theta$-supercompact, for any desired $\theta$. In addition, we prove several global results showing how the entire class of weakly compact cardinals, a proper class, can be made to coincide with the class of unfoldable cardinals, with the class of weakly measurable cardinals or with the class of nearly $\theta_\kappa$-supercompact cardinals $\kappa$, for nearly any desired function $\kappa\mapsto\theta_\kappa$. These results answer several questions that had been open in the literature and extend to these large cardinals the identity-crises phenomenon, first identified by Magidor with the strongly compact cardinals.
\end{abstract}

\maketitle

\section{Introduction}

In this article, we shall prove that the least weakly compact cardinal can exhibit any of several much stronger large cardinal properties. Namely, the least weakly compact cardinal can be unfoldable, weakly measurable and nearly $\theta$-supercompact for any desired $\theta$.

\begin{maintheorem} Assuming a suitable large cardinal hypothesis, the least weakly compact cardinal can be unfoldable, weakly measurable and even nearly $\theta$-supercompact, for any desired $\theta$.
\end{maintheorem}

Meanwhile, the least weakly compact cardinal can never exhibit these extra large cardinal properties in $L$, and indeed, the existence of a weakly measurable cardinal in the constructible universe is impossible. Furthermore, in each case the extra properties are strictly stronger than weak compactness in consistency strength.

The detailed results appear in theorems \ref{Theorem.LeastWC=LeastUnfoldable}, \ref{Theorem.LeastWC=LeastWM}, \ref{Theorem.LeastWC=LeastUnf=LeastWM}, \ref{Theorem.LeastWCNearlySC} and \ref{Theorem.LeastWC=LeastWM=LeastUnf=LeastNearlyThetaSC}. After this, the more global results of section \ref{Section.EveryWC} show that the entire class of weakly compact cardinals can be made to coincide with the class of unfoldable cardinals, with the class of weakly measurable cardinals, and with the class of nearly $\theta_\kappa$-supercompact cardinals $\kappa$, with enormous flexibility in the map $\kappa\mapsto\theta_\kappa$.

Our results therefore extend the `identity-crises' phenomenon---first identified (and named) by Magidor \cite{Magidor76:HowLargeIsFirstStrC}---which occurs when a given large cardinal property can be made in various models to coincide either with much stronger or with much weaker large cardinal notions. Magidor had proved that the least strongly compact cardinal can be the least supercompact cardinal in one model of set theory and the least measurable cardinal in another. Here, we extend the phenomenon to weak measurability, partial near supercompactness and unfoldability. Specifically, the least weakly measurable cardinal coincides with the least measurable cardinal under the \GCH, but it is the least weakly compact cardinal in our main theorem. Similarly, the least cardinal $\kappa$ that is nearly $\kappa^{+}$-supercompact is measurable with nontrivial Mitchell order under the \GCH, but it is the least weakly compact cardinal here (and similar remarks apply to near $\kappa^{++}$-supercompactness and so on). The least unfoldable cardinal is strongly unfoldable in $L$, and therefore a $\Sigma_2$-reflecting limit of weakly compact cardinals there, but it is the least weakly compact cardinal in our main theorem. The global results of section \ref{Section.EveryWC} show just how malleable these notions are.

\section{How to destroy and resurrect weak compactness}\label{Section.DestroyingResurrectingWeakCompactness}

In order to produce a model for the main theorem in which the least weakly compact cardinal exhibits much stronger properties, our strategy will be to begin with a large cardinal $\kappa$ exhibiting those stronger properties already in the ground model and then undertake an iteration that kills off all of the weakly compact cardinals below $\kappa$ in part by adding a $\gamma$-Souslin tree for every such $\gamma$. In order to know that the desired strong property of $\kappa$ itself survives, we will need to mollify the effects of adding such a tree at stage $\kappa$ itself. For this, we shall rely on a key technical observation, due originally to Kunen \cite{Kunen78:SaturatedIdeals} and explained here in lemma \ref{Lemma.Souslin+Branch=Cohen}, that the forcing to create a homogeneous $\kappa$-Souslin tree, followed by forcing with that tree, is forcing equivalent merely to adding a Cohen subset of $\kappa$, a comparatively mild forcing. The proof of our main theorem will adapt this observation in order to show how we may kill off all the weakly compact cardinals below $\kappa$ while preserving the unfoldability, the weak measurability and the near $\theta$-supercompactness of $\kappa$.

So let us explain Kunen's observation that adding a homogeneous $\kappa$-Souslin tree and then forcing with that tree is forcing equivalent to adding a Cohen subset to $\kappa$. An excellent detailed account of this argument can be found in \cite{GitmanWelch2011:Ramsey-likeCardinalsII}. One of the main imports of this observation, expressed by theorem \ref{Theorem.NonWeaklyCompactBecomesLarge} below, is that if one should start with a suitably prepared large cardinal $\kappa$, such as a measurable or even a supercompact cardinal, then the forcing to add a $\kappa$-Souslin tree will kill the large cardinal property, even the weak compactness of $\kappa$, but subsequent forcing with the tree will transform the forcing altogether into the comparatively mild Cohen forcing, which can preserve the large cardinal. In this way, the intermediate extension has a non-weakly compact cardinal that is made measurable or even supercompact (or more) by forcing with a $\kappa$-Souslin tree.

For the details, let $\kappa$ be any inaccessible cardinal, and consider the forcing $\S$ to create a homogeneous $\kappa$-Souslin tree. Conditions are pairs $(t,f)$, where $t\of 2^\ltkappa$ is a homogeneous normal $(\alpha+1)$-tree, $\alpha<\kappa$, and $f$ is an enumeration of the automorphism group $\Aut(t)$. A normal $(\alpha+1)$-tree is a subtree of $2^\ltkappa$ of height $\alpha+1$, meaning that it has nodes on level $\alpha$, but no nodes on any higher level, which satisfies the {\df normality} condition, that every node in $t$ extends to a node on any higher level. Such a tree is said to be {\df homogeneous} if its automorphism group acts transitively on each level, which means that for any two nodes $p,q$ on the same level of $t$, there is an automorphism $\pi:t\iso t$ such that $\pi(p)=q$. The order on $\S$ is that $(t,f)\leq (t',f')$ just in case $t$ is an end extension of $t'$, and furthermore the $\alpha^\th$ automorphism $\pi_\alpha'=f'(\alpha)$ of $t'$ listed by $f'$ extends to the $\alpha^\th$ automorphism $\pi_\alpha=f(\alpha)$ of $t$ listed by $f$. In other words, the trees are growing up towards $\kappa$, and furthermore the automorphisms of every level are preserved all the way up in a way that is recorded by the enumeration of the automorphism group. (In particular, it isn't just that every automorphism of the smaller tree extends to an automorphism of the taller tree, but rather each lower automorphism has a canonical extension to an automorphism of the higher tree, namely, the automorphism with the same index in the enumeration.)

Forcing with $\S$ clearly adds a $\kappa$-tree $T$, since any condition can be extended to any height, and it is not difficult to see that $T$ is a $\kappa$-Souslin tree. Namely, given any name $\dot A$ for a maximal antichain in $T$, one undertakes a bootstrap argument to build a condition $(t,f)\in \S$ that decides $\dot A\intersect t$ and forces it to be maximal there (the automorphisms add a small complication, but the essential idea from the $\omega_1$ case works out fine). Thus, $(t,f)$ forces that $\dot A$ is bounded in $T$, and so $T$ is $\kappa$-Souslin. Consider now the forcing $\S*\dot T$, which first adds the tree $T$, and then forces with $T$. Conditions in $\S*\dot T$ amount to $(t,f,\tau)$, where $\tau$ is an $\S$-name for an element of $\dot T$. By another bootstrap argument, one may find a dense set of such conditions with the further property that $(t,f)$ decides the value of $\tau$ in $t$, forcing it to be a particular node on the top level of $t$. Thus, the iteration $\S*\dot T$ is forcing equivalent to the forcing $\R$ having conditions $(t,f,p)$, where $(t,f)\in\S$ and $p$ is a node on the top level of $t$, ordered in the obvious way. The key observation now is that $\R$ is $\ltkappa$-closed in $V$, since the nodes $p$ on any descending sequence of conditions can be used to specify a node on the limit level and a corresponding branch, whose automorphic images under the automorphisms will combine to form a homogeneous normal tree, providing us with a condition below the sequence. Thus, since $\R$ is $\ltkappa$-closed and has size $\kappa$, it follows that $\R$ is forcing equivalent to Cohen forcing $\Add(\kappa,1)$. From this (and the fact that $\kappa$ is inaccessible), it follows that $\S$ is $\ltkappa$-distributive and preserves all cardinals and cofinalities. Alternatively, one can deduce this by observing that $\S$ is strategically $\ltkappa$-closed, since player two can get through limits by explicitly extending paths as the game is played. In summary, the argument establishes:

\begin{lemma}[Kunen]\label{Lemma.Souslin+Branch=Cohen}
 If $\kappa$ is inaccessible, then there is a strategically $\ltkappa$-closed notion of forcing $\S$ of size $\kappa$ such that:
 \begin{enumerate}
  \item Forcing with $\S$ adds a homogenous $\kappa$-Souslin tree $T$.
  \item The combined forcing $\S*\dot T$ is forcing equivalent to Cohen forcing $\Add(\kappa,1)$.
 \end{enumerate}
\end{lemma}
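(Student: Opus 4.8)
The plan is to let $\S$ be precisely the forcing described above, whose conditions are pairs $(t,f)$ consisting of a homogeneous normal $(\alpha+1)$-tree $t\of 2^\ltkappa$ together with an enumeration $f$ of $\Aut(t)$, ordered by end-extension that respects the indexing of automorphisms. I would first verify that the generic object assembles into a $\kappa$-tree $T$: since any condition may be extended to any prescribed height below $\kappa$ (using normality, and using inaccessibility to keep the trees and their automorphism groups small), genericity guarantees that $T=\bigcup\{\,t : (t,f)\in G\,\}$ is a normal tree of height $\kappa$. To see that $T$ is $\kappa$-Souslin, I would run a bootstrap argument: given a name $\dot A$ for a maximal antichain in $T$, I would build an increasing chain of conditions deciding more and more of $\dot A$ and amalgamate at an appropriate limit level $\alpha<\kappa$ (conveniently arranged via an elementary submodel of size $<\kappa$ whose intersection with $\kappa$ is $\alpha$), obtaining a condition $(t,f)$ with top level $\alpha$ that decides $\dot A\intersect t$ and forces it maximal there. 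Since every node above level $\alpha$ extends a node of $t$, this condition forces $\dot A$ to be sealed, hence bounded in $T$.

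Next I would analyze the two-step iteration $\S*\dot T$. A condition here is essentially a triple $(t,f,\tau)$ where $\tau$ is an $\S$-name for a node of the generic tree. The key reduction is to pass to the dense subset of conditions for which $(t,f)$ already decides $\tau$ to be a specific node $p$ on the top level of $t$; a further density argument, again by bootstrapping, shows that such conditions are dense. Thus $\S*\dot T$ is forcing equivalent to the poset $\R$ whose conditions are triples $(t,f,p)$ with $(t,f)\in\S$ and $p$ a node on the top level of $t$, ordered so that extending the tree also extends the designated node along the generic branch.

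I would then establish the crucial closure fact: $\R$ is $\ltkappa$-closed in $V$. Given a descending sequence of conditions of length $\lambda<\kappa$, the designated top nodes $p_\xi$ specify a cofinal branch through the union of the trees; this branch determines a node on the limit level, and applying the coherently indexed automorphisms produces all of its required automorphic images, which together with the union of the trees form a homogeneous normal tree of the next height. This yields a genuine lower bound in $\R$, so $\R$ is $\ltkappa$-closed. Since $\R$ has size $\kappa$ and $\kappa^\ltkappa=\kappa$ by inaccessibility, $\R$ is a $\ltkappa$-closed, separative, atomless poset of size $\kappa$, hence forcing equivalent to $\Add(\kappa,1)$ by the standard uniqueness theorem for such forcing, giving clause~(2). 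Clause~(1) is immediate from the construction of $T$, and the strategic $\ltkappa$-closure of $\S$ follows either from the distributivity extracted from the equivalence $\S*\dot T\equiv\Add(\kappa,1)$ together with the $\ltkappa$-closure of $\R$, or directly by exhibiting a winning strategy for player~II in the closure game, who survives limits by explicitly threading the paths built so far.

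The main obstacle I anticipate is the bookkeeping forced by homogeneity. At every limit stage of each bootstrap and in the closure argument, one must not merely extend a single branch but simultaneously extend \emph{all} the automorphisms coherently, so that the amalgamated tree is again homogeneous and its automorphism group is correctly enumerated by the new $f$. Ensuring that the canonical extensions of automorphisms---those with the same index in the enumeration---cohere through limits, and that the chosen branch together with its automorphic images genuinely closes up into a homogeneous normal tree, is the delicate technical heart of the argument, though it is a routine if fiddly adaptation of the classical $\omega_1$ construction.
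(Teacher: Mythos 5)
Your proposal is correct and follows essentially the same route as the paper: the same poset $\S$ of homogeneous normal trees with enumerated automorphism groups, the same reduction of $\S*\dot T$ to the poset $\R$ of conditions $(t,f,p)$ with a designated top node, the same $\ltkappa$-closure argument for $\R$ via the branch determined by the designated nodes together with its automorphic images, and the same conclusion that a $\ltkappa$-closed nontrivial poset of size $\kappa$ is equivalent to $\Add(\kappa,1)$. Your closing remarks on the homogeneity bookkeeping and the two ways to see strategic closure of $\S$ match the paper's presentation as well.
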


For a further detailed account of this argument, we refer the reader to \cite{GitmanWelch2011:Ramsey-likeCardinalsII} or to \cite{Kunen78:SaturatedIdeals}.

\begin{theorem}[Kunen]\label{Theorem.NonWeaklyCompactBecomesLarge}
 It is a relatively consistent with \ZFC\ that a cardinal $\kappa$ is not weakly compact, but becomes weakly compact and indeed much more (measurable, strong, strongly compact, supercompact) in a forcing extension $V[G]$ obtained by forcing with a certain $\kappa$-Souslin tree.
\end{theorem}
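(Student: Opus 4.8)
The plan is to apply Lemma~\ref{Lemma.Souslin+Branch=Cohen} over a large cardinal $\kappa$ that has first been prepared to be indestructible by the mild forcing $\Add(\kappa,1)$. Concretely, I would begin with $\kappa$ supercompact---for the strongest conclusion; a measurable, strong, or strongly compact cardinal suffices for the correspondingly weaker conclusions---and force with the Laver preparation, so that the supercompactness of $\kappa$ becomes indestructible by any $\ltkappa$-directed closed forcing. Write $V_0$ for the resulting model. In $V_0$ the cardinal $\kappa$ is inaccessible, so that Lemma~\ref{Lemma.Souslin+Branch=Cohen} applies, and its supercompactness survives any further $\ltkappa$-closed forcing, in particular $\Add(\kappa,1)$, which is $\ltkappa$-closed and hence $\ltkappa$-directed closed.

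Now work in $V_0$, let $\S$ be the forcing of Lemma~\ref{Lemma.Souslin+Branch=Cohen}, let $G\subseteq\S$ be generic, and set $V=V_0[G]$. By clause~(1) the model $V$ contains a homogeneous $\kappa$-Souslin tree $T$; since every weakly compact cardinal has the tree property and hence admits no $\kappa$-Aronszajn tree, let alone a $\kappa$-Souslin one, the cardinal $\kappa$ is \emph{not} weakly compact in $V$. This is the model in which $\kappa$ fails to be weakly compact. Next force with $T$ over $V$ to obtain $V[H]$ for a $T$-generic branch $H$. By clause~(2) the iteration $\S*\dot T$ is forcing equivalent to $\Add(\kappa,1)$, so $V[H]=V_0[G*H]$ is, up to this equivalence, simply an $\Add(\kappa,1)$-extension of $V_0$. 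Consequently $\kappa$ is supercompact in $V[H]$, and in particular weakly compact, measurable, strong, and strongly compact there. Thus $\kappa$ is not weakly compact in $V$, yet forcing over $V$ with the single $\kappa$-Souslin tree $T$ makes $\kappa$ supercompact and more, exactly as claimed.

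The one point demanding care---and the main obstacle---is the preservation clause: that $\Add(\kappa,1)$ preserves the relevant large cardinal property of $\kappa$, since it is precisely this that lets the second forcing resurrect the large cardinal after the first has destroyed even its weak compactness. For a supercompact cardinal this is exactly Laver indestructibility. For the weaker targets one invokes the corresponding preservation facts. For measurability, for example, given a normal ultrapower embedding $j\colon V_0\to M$ and an $\Add(\kappa,1)$-generic $g$, one lifts $j$ to $j\colon V_0[g]\to M[g^*]$ by building an $M$-generic filter $g^*$ for $j(\Add(\kappa,1))=\Add(j(\kappa),1)^M$ with $j\image g\subseteq g^*$; under the \GCH\ this is routine, using the $\ltkappa$-closure of the quotient forcing computed in $M$ together with the master condition $\bigcup g$. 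Analogous indestructibility results cover the strong and strongly compact cases. Granting these preservation facts, the relative consistency statement follows at once: the consistency of a supercompact---respectively measurable, strong, strongly compact---cardinal yields the consistency of the configuration described, the remaining structural facts about $\S$, $T$, and the equivalence $\S*\dot T\iso\Add(\kappa,1)$ being furnished by Lemma~\ref{Lemma.Souslin+Branch=Cohen} and the discussion preceding it.
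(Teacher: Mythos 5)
Your proposal is correct and follows essentially the same route as the paper: prepare $\kappa$ so that its large cardinal property is indestructible by $\Add(\kappa,1)$, add the homogeneous $\kappa$-Souslin tree via $\S$ to destroy weak compactness, and then force with the tree so that, by Lemma~\ref{Lemma.Souslin+Branch=Cohen}, the combined extension is just an $\Add(\kappa,1)$-extension in which the large cardinal property survives (the paper illustrates with the measurable case and an Easton-iteration preparation, while you lead with the supercompact case and the Laver preparation, but these are the same argument). One small quibble: you write that $\Add(\kappa,1)$ is ``$\ltkappa$-closed and hence $\ltkappa$-directed closed,'' which has the implication backwards---the poset is indeed $\ltkappa$-directed closed, but that must be checked directly (directed sets of conditions are pairwise compatible partial functions whose union is a condition), not deduced from mere $\ltkappa$-closure.
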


\begin{proof}
Let us illustrate with the case of measurability. Suppose that $\kappa$ is measurable in $V$ and furthermore, that the measurability of $\kappa$ is preserved by forcing over $V$ with the forcing $\Add(\kappa,1)$ to add a Cohen subset to $\kappa$. This can be achieved by suitable preparatory forcing, for example, by an Easton support $\kappa$-iteration of such forcing below $\kappa$, combined with forcing to ensure the \GCH\ at $\kappa$. Consider the forcing $\S$ of lemma \ref{Lemma.Souslin+Branch=Cohen}, which adds a $V$-generic $\kappa$-Souslin tree $T$. Since the existence of this tree shows that the tree property fails for $\kappa$ in $V[T]$, it follows that $\kappa$ is not weakly compact there. But let us now force with $T$ over $V[T]$, adding a $V[T]$-generic branch $b$. Lemma \ref{Lemma.Souslin+Branch=Cohen} shows that the combined forcing $T*b\of \S*\dot T$ is isomorphic to $\Add(\kappa,1)$, and so $\kappa$ remains measurable in $V[T*b]$. So the non-weakly compact cardinal $\kappa$ in $V[T]$ became measurable in $V[T][b]$ by forcing with $T$. A similar argument works with any other large cardinal that can be made indestructible by the forcing $\Add(\kappa,1)$, and this includes most of the commonly considered large cardinal concepts, such as measurable, strong, strongly compact, supercompact and so on. So it is relatively consistent from such hypotheses that a non-weakly compact cardinal $\kappa$ in $V[T]$ becomes correspondingly measurable, strong, strongly compact or supercompact in $V[T][b]$, after forcing with that tree.
\end{proof}

One may think of the $\kappa$-Souslin tree $T$ as the `last' Souslin tree of $V[T]$, for if we chop down this last tree, by forcing with it over $V[T]$, we find in the resulting extension that there are no $\kappa$-Souslin trees at all.

\section{The least weakly compact cardinal can be unfoldable}

We would like now to prove that the least weakly compact cardinal can be unfoldable. This settles a question that has been open since the time unfoldable cardinals were first introduced in \cite{Villaveces1998:ChainsOfEndElementaryExtensionsOfModelsOfSetTheory}, and it has been considered more recently by various researchers. We shall subsequently show more, in theorem \ref{Theorem.WC=Unf=W}, that the class of weakly compact cardinals can coincide with the class of unfoldable cardinals, even when there are a proper class of them.

Let us briefly recall some definitions. A cardinal $\kappa$ is {\df unfoldable} if for every ordinal $\theta$ and every transitive set $M$ of size $\kappa$ with $\kappa\in M$, there is a transitive set $N$ and an elementary embedding $j:M\to N$ with critical point $\kappa$ and $j(\kappa)>\theta$. The cardinal $\kappa$ is {\df strongly unfoldable} if for every $\theta$ and every such $M$ there is such an embedding with $V_\theta\of N$. Similarly, $\kappa$ is {\df superunfoldable} if for every $\theta$ and every such $M$ with $M^\ltkappa\of M$ there is such an embedding with $N^\theta\subseteq N$. As it allows for a simpler argument to be carried out, in what follows we make use of the fact that a cardinal $\kappa$ is strongly unfoldable if and only if it is superunfoldable \cite[Corollary 7]{DzamonjaHamkins2006:DiamondCanFail}. We refer the reader to the articles \cite{Hamkins2001:UnfoldableCardinals,DzamonjaHamkins2006:DiamondCanFail,Johnstone2008:StronglyUnfoldableCardinalsMadeIndestructible,HamkinsJohnstone2010:IndestructibleStrongUnfoldability} for numerous equivalent formulations of these concepts and other background information and results.

Our first instance of the main theorem is the following.

\begin{theorem}\label{Theorem.LeastWC=LeastUnfoldable}
 If there is a strongly unfoldable cardinal, then there is a forcing extension in which it is the least weakly compact cardinal and unfoldable.
\end{theorem}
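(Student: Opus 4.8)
The plan is to start with a strongly unfoldable (equivalently superunfoldable) cardinal $\kappa$, arrange the \GCH\ by mild preparatory forcing if necessary, and then force with an Easton-support iteration $\P_\kappa$ of length $\kappa$ that, at each inaccessible stage $\gamma<\kappa$, forces with the Kunen tree forcing $\S$ of lemma~\ref{Lemma.Souslin+Branch=Cohen} to add a $\gamma$-Souslin tree and thereby destroy the weak compactness of $\gamma$. The full forcing will be $\P_\kappa*\dot\S*\dot T$, where at stage $\kappa$ we first add a $\kappa$-Souslin tree with $\S$ and then force with that tree $T$; by lemma~\ref{Lemma.Souslin+Branch=Cohen} this last two-step is equivalent to $\Add(\kappa,1)$, so the whole iteration is equivalent to $\P_\kappa*\Add(\kappa,1)$. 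Writing $W$ for the resulting extension, I would prove two things: first, that no $\gamma<\kappa$ is weakly compact in $W$, and second, that $\kappa$ remains unfoldable (hence weakly compact) in $W$. Together these make $\kappa$ the least weakly compact cardinal and unfoldable, as desired; note that $\kappa$ is necessarily no longer strongly unfoldable, since a strongly unfoldable cardinal is a limit of weakly compact cardinals, all of which we have killed below $\kappa$.

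For the first point, at each inaccessible $\gamma<\kappa$ the generically added $\gamma$-Souslin tree witnesses the failure of the tree property, so $\gamma$ is not weakly compact in the stage-$\gamma$ extension. The remaining forcing---the tail of $\P_\kappa$ above $\gamma$ together with the final $\Add(\kappa,1)$---can be arranged to be sufficiently closed (roughly $\leqgamma$-closed, using Easton supports and the $\ltkappa$-closure of $\Add(\kappa,1)$) that it adds no cofinal branch through this tree; hence the tree stays Souslin and $\gamma$ stays non-weakly compact in $W$. A parallel argument shows that no cardinal which was not weakly compact becomes so, so $\kappa$ is the least weakly compact cardinal in $W$.

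The heart of the argument is the second point, preserving unfoldability, which I would establish by lifting a superunfoldability embedding. Given a target $\theta$ and a transitive $M^{*}\in W$ of size $\kappa$ with $\kappa\in M^{*}$, I would pull $M^{*}$ back to a name and apply the strong (equivalently, super) unfoldability of $\kappa$ to obtain $j\colon M\to N$ with $\cp(j)=\kappa$, $j(\kappa)>\theta$, $M^{\ltkappa}\of M$, $V_\theta\of N$, and moreover with $N$ closed under $\theta$-sequences, where $M$ contains the relevant names. Since $V_\theta\of N$, the model $N$ agrees with $V$ below $\theta$ about which cardinals are weakly compact, so $\kappa$ is weakly compact in $N$ and the iteration $j(\P_\kappa)$ factors in $N$ as $\P_\kappa*\dot\S*\dot\Ptail$, its stage-$\kappa$ forcing being exactly the Kunen forcing $\S$. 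This is where lemma~\ref{Lemma.Souslin+Branch=Cohen} does its essential work: the stage-$\kappa$ generic needed to continue the lift is supplied for free by our own generic, because the $\Add(\kappa,1)$-generic we used factors as $\S*\dot T$, and its first coordinate is precisely an $\S$-generic $g$ over $N[G]$. I would then build a $\Ptail$-generic $\Gtail$ over $N[G][g]$ by diagonalizing against its dense sets, using the high closure of $\Ptail$ and the $\theta$-closure of $N$, and finally lift through the top copy of $\Add(j(\kappa),1)$ using the master condition obtained as the union of the pointwise image of our top generic. The composite lift then has critical point $\kappa$ and sends $\kappa$ above $\theta$, witnessing unfoldability of $\kappa$ in $W$ for the given $M^{*}$ and $\theta$.

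I expect the main obstacle to be the lifting in the third paragraph, and specifically the interplay between two competing demands on the iteration. On one hand, enough \emph{must} be destroyed below $\kappa$ that no weakly compact cardinal survives, which pushes toward adding many Souslin trees; on the other hand, the supports and closure of the iteration must be tuned so that no later stage resurrects a killed $\gamma$ by adding a cofinal branch, while the tail $\Ptail$ over $N$ remains closed enough, and $N$ itself closed enough under sequences, that $\Gtail$ and the top master condition can actually be assembled inside $W$. The conceptual crux, however, is the realization---via lemma~\ref{Lemma.Souslin+Branch=Cohen}---that the Souslin-tree forcing appearing at stage $\kappa$ of $N$'s iteration is mollified into the benign $\Add(\kappa,1)$, so that the very forcing which destroys weak compactness at $\kappa$ in the intermediate model is exactly the forcing whose generic we need in order to lift the superunfoldability embedding.
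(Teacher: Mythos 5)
Your overall strategy---kill the weakly compact cardinals below $\kappa$ with Souslin trees, preserve unfoldability by lifting a superunfoldability embedding, and use lemma~\ref{Lemma.Souslin+Branch=Cohen} to supply the stage-$\kappa$ generic of $j(\P_\kappa)$---is the right one, but your specific architecture breaks at the final lifting step. You use the single generic $g*b$ for $\S_\kappa*\dot T$ twice: its first coordinate $g$ goes into $j(G)=G*g*\Gtail$, and then all of $g*b$ is supposed to be transferred up to $j(\kappa)$ via the master condition $\bigcup j\image(g*b)$. But the branch $b$ is generic over $V[G][g]$, while $\Ptail$ is highly strategically closed over $N[G][g]$, so $N[j(G)]=N[G][g][\Gtail]$ has no subsets of $\kappa$ beyond those of $N[G][g]$; in particular $b\notin N[j(G)]$. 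Hence your alleged master condition (a node of height $\kappa$ deciding $b$, equivalently the total function $c=g*b$ on $\kappa$) is not a condition of $j(\S_\kappa*\dot T)$ in $N[j(G)]$, and in fact no $N[j(G)]$-generic filter on that forcing can contain $j\image(g*b)$: any such filter meets the dense set of conditions of height above $\kappa$, and any such condition compatible with every element of $g*b$ would compute $b$ inside $N[j(G)]$. So the lift to $M[G][g][b]$ cannot be performed---and it must be, since $b$ itself is a subset of $\kappa$ in your final model that has to be caught in the domain of some embedding. The paper avoids this trap by decoupling the two roles: the stage-$\gamma$ forcing is $\Add(\gamma,1)*\dot\S_\gamma$ (Cohen set \emph{first}) and the top forcing is $\Add(\kappa,\kappa^+)$; one Cohen subset $g_0$ then serves both as the $\Add(\kappa,1)$-part of the stage-$\kappa$ generic of $j(\P_\kappa)$ (so it lies in $N[j(G)]$ and $\bigcup j\image g_0=\bigcup g_0$ is a genuine master condition there) and as the set to be lifted, while a second, mutually generic Cohen subset $g_1$ is converted by the lemma into $T*b$ and only its tree part $T$ is fed into $j(G)$---the branch of that disposable tree is never needed anywhere.

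A second, independent gap: you give no reason why $\Ptail$ is closed enough for the diagonalization. Its first nontrivial stage occurs at the least $N$-inaccessible above $\kappa$, which could lie far below $\theta$, in which case $\Ptail$ is not strategically $\leqtheta$-closed and $\Gtail$ cannot be built against the $\theta^+$ many dense sets coming from $N$. The paper handles this by first arranging---via preparatory forcing using indestructibility results, or by cutting the universe off at the least inaccessible above $\kappa$---that there are no inaccessible cardinals above $\kappa$ at all, and then using $N^\theta\of N$ to conclude that $N$ sees none in $(\kappa,\theta]$; this is exactly where strong unfoldability, as opposed to mere unfoldability, is needed. You flag this as an expected obstacle but supply no mechanism for resolving it.
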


\noindent Since every unfoldable cardinal is strongly unfoldable in $L$, the theorem implies that if there is an unfoldable cardinal, then in a forcing extension of $L$, it is the least weakly compact cardinal and unfoldable. The consistency strength of the hypothesis of the theorem, therefore, is optimal. Note that the least weakly compact cardinal can never be strongly unfoldable, as strongly unfoldable cardinals are $\Sigma_2$-reflecting.

\begin{proof}[Proof of thm \ref{Theorem.LeastWC=LeastUnfoldable}]
Suppose that $\kappa$ is strongly unfoldable. By forcing if necessary, we may assume that the \GCH\ holds. Let us assume also, for convenience, that there are no inaccessible cardinals above $\kappa$; we shall explain later how to omit this assumption. We define an Easton-support forcing iteration $\P=\langle (\P_\gamma,\dot{\Q}_\gamma)\mid\gamma\leq\kappa\rangle$ of length $\kappa+1$ as follows. The forcing $\dot\Q_\gamma$ at stage $\gamma$ is trivial, unless $\gamma$ is an inaccessible cardinal in $V[G_\gamma]$, in which case $\dot{\Q}_\gamma$ is the two-step iteration $\Add(\gamma,1)*\dot\S_\gamma$ that adds a Cohen subset to $\gamma$ over $V[G_\gamma]$ and then adds a homogeneous $\gamma$-Souslin tree via the forcing of lemma \ref{Lemma.Souslin+Branch=Cohen}. That is, $\dot\Q_\gamma$ is a $\P_\gamma$-name that is forced by conditions in $\P_\gamma$ either to be trivial or to be the two-step forcing we mentioned, to the extent that those conditions force $\gamma$ to be inaccessible in $V[\dot G_\gamma]$. At the top of the iteration, at stage $\kappa$, let $\dot{\Q}_\kappa=\dot{\Add}(\kappa,\kappa^+)$ be a $\P_\kappa$-name for the forcing to add $\kappa^+$ many Cohen subsets to $\kappa$ as defined in $V^{\P_\kappa}$. Suppose that $G*g\of\P_\kappa*\dot\Q_\kappa$ is $V$-generic, and consider the model $V[G][g]$. It is easy to see that there are no weakly compact cardinals below $\kappa$ in $V[G][g]$ as follows. For each inaccessible cardinal $\gamma<\kappa$ that remains inaccessible in the partial extension by $\P_\gamma$, the stage $\gamma$ forcing explicitly adds a $\gamma$-Souslin tree via $\S_\gamma$, and this tree survives through all the subsequent forcing after stage $\gamma$ because this later forcing is strategically $\leqgamma$-closed. These trees violate the tree property for $\gamma$, and so no such $\gamma$ below $\kappa$ is weakly compact in the final model $V[G][g]$.

It remains to verify that $\kappa$ remains unfoldable in $V[G][g]$. For this, suppose $A\of\kappa$ in $V[G][g]$ and consider any large ordinal $\theta$, which by increasing if necessary we may assume is a beth fixed point $\beth_\theta=\theta$. Since the stage $\kappa$ forcing is $\kappa^+$-c.c., it follows that $A\in V[G][g\restrict\beta]$ for some $\beta<\kappa^+$. Since adding $\beta$ many Cohen subsets to $\kappa$ is isomorphic to adding just one, we may apply an automorphism of the forcing and assume without loss of generality that $A\in V[G][g_0]$, where $g_0\of\kappa$ is the Cohen set added by $g$ in the very first coordinate. Thus, $A=\dot A_{G*g_0}$ for some $\P_\kappa*\Add(\kappa,1)$-name $\dot A\in H_{\kappa^+}$. Since $\kappa$ is strongly unfoldable in $V$, the results of \cite{DzamonjaHamkins2006:DiamondCanFail} show that $\kappa$ is also superunfoldable there. In fact, by \cite[Lemma 5]{DzamonjaHamkins2006:DiamondCanFail} there is a transitive set $M\prec H_{\kappa^+}$ with $\dot A\in M$, $M^\ltkappa\of M$ and an elementary embedding $j:M\to N$, where $\theta<j(\kappa)$, $N^\theta\of N$ and $|N|^V=\beth_{\theta+1}=\theta^+$. The forcing $j(\P_\kappa)$ factors as $\P_\kappa*\dot{\Add}(\kappa,1)*\dot\S_\kappa*\dot{\P}_{\tail}$, where $\dot{\P}_{\tail}$ is a $j(\P)_{\kappa+1}$-name for the tail of the iteration $j(\P_\kappa)$ beyond stage $\kappa$. Let $g_1$ be the Cohen subset added on the next coordinate of $g$, so that $g_0$ and $g_1$ are mutually generic over $V[G]$. By lemma \ref{Lemma.Souslin+Branch=Cohen}, we may view $g_1$ as first adding a $\kappa$-Souslin tree $T$ and then forcing with this tree to add a branch $b$, so that $g_1=T*b\of\S_\kappa*\dot T$ where $\S_\kappa=(\dot{\S}_\kappa)_{G*g_0}$. In particular, $g_0*T$ is $N[G]$-generic for the stage $\kappa$ forcing arising in $j(\P_\kappa)$. Since $N^\theta\of N$ in $V$, it follows that $N[G][g_0][T]^\theta\of N[G][g_0][T]$ in $V[G][g_0][T]$ by the chain condition on $\P_\kappa*\dot{\Add}(\kappa,1)*\dot{\S}_\kappa$. Thus, because we assumed that there are no inaccessible cardinals greater than $\kappa$ in $V$, it follows that the next stage of forcing in $\Ptail=(\dot{\P}_{\tail})_{G*g_0*T}$ is beyond $\theta$, and so $\Ptail$ is strategically $\leqtheta$-closed in $N[G][g_0][T]$. Moreover, since $|N|^V=\theta^+$, it follows that in $V[G][g_0][T]$ we may construct by diagonalization an $N[G][g_0][T]$-generic filter $\Gtail\of\Ptail$. Thus, we may lift the embedding in $V[G][g_0][T]$ to $j:M[G]\to N[j(G)]$, where $j(G)=G*g_0*T*\Gtail$. Since $N[j(G)]^\theta\of N[j(G)]$ in $V[G][g_0][T]$, we may furthermore use $\bigcup j"g_0=\bigcup g_0$ as a master condition and again by diagonalization find an $N[j(G)]$-generic filter $j(g_0)\of\Add(j(\kappa),1)^{N[j(G)]}$. Thus, we may lift the embedding through this forcing to achieve $j:M[G][g_0]\to N[j(G)][j(g_0)]$. From $A=\dot A_{G*g_0}$ we know that $A\in M[G][g_0]$, and since $\theta<j(\kappa)$ it follows that our lifted embedding witnesses this instance of unfoldability in $V[G][g]$. Thus, $\kappa$ is unfoldable in $V[G][g]$, as desired.

Finally, we may omit the assumption that there are no inaccessible cardinals above $\kappa$ by first forcing to make the strong unfoldability of $\kappa$ indestructible by $\ltkappa$-closed $\kappa^+$-preserving forcing, using results of \cite{HamkinsJohnstone2010:IndestructibleStrongUnfoldability}, and then forcing above $\kappa$ so as to destroy every inaccessible cardinal, by using class forcing to create a class club that avoids the inaccessibles, and then forcing so as to collapse cardinals not in this club. Alternatively, if all that is desired is a transitive model in which the least weakly compact cardinal is unfoldable, then one may instead attain the assumption that there are no inaccessible cardinals above $\kappa$ simply by cutting the universe off at the least inaccessible cardinal above $\kappa$ and then working in that cut-off universe.
\end{proof}

Observe that in the above proof, the superunfoldability of $\kappa$ is used to ensure that the model $N$ knows that there are no inaccessible cardinals in the interval $(\kappa,\theta]$, and hence that $\P_{\tail}$ is strategically ${\leq}\theta$-closed in $N[G][g_0][T]$. Since the assumption that $\kappa$ is unfoldable does not provide such a situation, the mere unfoldability of $\kappa$ does not allow one to run the above argument. Although, a similar argument can be carried out from the seemingly weaker assumption on $\kappa$ that for every ordinal $\theta$ and every transitive $M \prec H_{\kappa^+}$ of size $\kappa$, there exists a $\theta$-unfoldability embedding $j: M \rightarrow N$ with critical point $\kappa$ having the additional property that $N$ thinks that there are no inaccessible cardinals in the interval $(\kappa,\theta]$.  By forcing less often, we could weaken the assumption further by replacing inaccessible with weakly compact in the above comments.

We note furthermore how our proof method, and in particular, our apparent need to destroy large cardinals above $\kappa$, appears to mesh neatly with theorem 3.6 in \cite{Villaveces1998:ChainsOfEndElementaryExtensionsOfModelsOfSetTheory}, asserting that if there is a Ramsey cardinal, then the least weakly compact is strictly less than the least unfoldable cardinal.

\section{The least weakly compact cardinal can be weakly measurable}

We next prove that the least weakly compact cardinal can be weakly measurable. This settles the main question left open in Schanker's work \cite{Schanker2011:WeaklyMeasurableCardinals,Schanker2011:Dissertation}, where the weakly measurable cardinals are introduced.

Recall that an inaccessible cardinal $\kappa$ is {\df weakly measurable} if for every transitive set $M$ of size $\kappa^+$ with $\kappa\in M$, there is a transitive set $N$ and an elementary embedding $j:M\to N$ with critical point $\kappa$. Equivalently, $\kappa$ is weakly measurable if for any collection $\mathcal{A}$ of $\kappa^+$ many subsets of $\kappa$, there is a $\kappa$-complete nonprincipal filter $F$ measuring every set in $\mathcal{A}$. In this way and others, the weakly measurable cardinals extend various weak compactness concepts from the realm of objects of size $\kappa$ up to objects of size $\kappa^+$, thereby forming a hybrid notion between weak compactness and measurability.  Under the \GCH, of course, every weakly measurable cardinal is fully measurable, and Schanker proved in any case that every weakly measurable cardinal is measurable in an inner model. So in this sense, the weakly measurable cardinals seem fairly close to the measurable cardinals. Nevertheless, Schanker established that the two concepts are not equivalent---any measurable cardinal can become non-measurable, while remaining weakly measurable, in a forcing extension---and he inquired how far weak measurability could deviate from measurability. He inquired specifically whether the least weakly compact cardinal can be weakly measurable. Here, we answer affirmatively.

\begin{theorem}\label{Theorem.LeastWC=LeastWM}
 If there is a measurable cardinal, then there is a forcing extension in which it is the least weakly compact cardinal and still weakly measurable.
\end{theorem}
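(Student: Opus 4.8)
The plan is to rerun the strategy of section~\ref{Section.DestroyingResurrectingWeakCompactness} and of theorem~\ref{Theorem.LeastWC=LeastUnfoldable}, but lifting the \emph{measurable} embedding in place of an unfoldability embedding. Suppose $\kappa$ is measurable and, forcing if necessary, that the \GCH\ holds; fix a normal measure giving $j:V\to M_0=\Ult(V,U)$, so that $M_0^\kappa\of M_0$ and $\kappa^+<j(\kappa)<\kappa^{++}$. Define the Easton-support iteration $\P_\kappa$ exactly as before, forcing with $\Add(\gamma,1)*\dot\S_\gamma$ at each $\gamma<\kappa$ that is still inaccessible at that stage, and at the top let $\dot\Q_\kappa=\dot\Add(\kappa,\kappa^{++})$. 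I take $\kappa^{++}$ rather than $\kappa^+$ Cohen subsets for two reasons: it forces $2^\kappa=\kappa^{++}$, so that the weak measurability I obtain is genuinely weaker than measurability, and it leaves spare coordinates free for the image-stage forcing below. As in the previous theorem the stage-$\gamma$ Souslin trees survive the strategically $\leqgamma$-closed tail and witness the failure of the tree property, so no $\gamma<\kappa$ is weakly compact in $V[G][g]$, while $\kappa$ stays inaccessible because $\P_\kappa$ is $\kappa$-c.c.\ and $\Add(\kappa,\kappa^{++})$ is $\ltkappa$-closed. Since weak measurability implies weak compactness, once I show $\kappa$ weakly measurable it will follow that $\kappa$ is the least weakly compact cardinal.

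The substance is to verify weak measurability in $V[G][g]$. Fix a family $\mathcal{A}$ of $\kappa^+$ subsets of $\kappa$ there; I must produce a $\kappa$-complete nonprincipal filter measuring each of its members. Since $\Add(\kappa,\kappa^{++})$ is $\kappa^+$-c.c., each member of $\mathcal{A}$ has a name using only $\leqkappa$ coordinates, so $\mathcal{A}\of V[G][g\restrict Y]$ for some $Y\of\kappa^{++}$ of size $\kappa^+$, and after applying an automorphism of $\Add(\kappa,\kappa^{++})$ I may assume $Y$ omits two designated spare coordinates $c_0,c_1$. Choose in $V$ a model $W\prec H_\Theta$ with $|W|=\kappa^+$, $W^\kappa\of W$, $\kappa+1\of W$, and $\P$ together with $\Add(\kappa,Y)$-names for the members of $\mathcal{A}$ in $W$. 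Then $j\restriction W:W\to j(W)$ has critical point $\kappa$; and because $W$ and $M_0$ are each closed under $\kappa$-sequences and $\kappa<j(\kappa)$, so is $j(W)$, which again has size $\kappa^+$.

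I now lift $j\restriction W$ through the forcing in two moves. Through $\P_\kappa$: recalling $j(\P_\kappa)=\P_\kappa*(\dot\Add(\kappa,1)*\dot\S_\kappa)*\dot{\P}_{\tail}$, I use the spare coordinates $g\restrict c_0$ and, via lemma~\ref{Lemma.Souslin+Branch=Cohen}, $g\restrict c_1$ to supply a $j(W)[G]$-generic $g_0*T$ for the stage-$\kappa$ forcing, and I build a generic $G_{\tail}$ for the tail by diagonalization over the size-$\kappa^+$ model. This step needs no hypothesis about inaccessibles above $\kappa$: the least inaccessible of $M_0$ above $\kappa$ is a limit cardinal and so exceeds $\kappa^+$, whence $\P_{\tail}$ is ${\le}\kappa^+$-closed and has only $\kappa^+$ dense subsets in the model. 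Through $\Add(\kappa,Y)\iso\Add(\kappa,\kappa^+)$: since $\kappa^+<j(\kappa)$, the set $q=\bigcup j\image(g\restrict Y)$ is a single condition of $j(\Add(\kappa,Y))$, and diagonalizing below $q$ against the $\kappa^+$ dense sets of $j(W)[j(G)]$ yields the required generic. Collapsing $W[G][g\restrict Y]$ and its image then gives an elementary embedding $j_M:M\to N$ with critical point $\kappa$, where $M$ is transitive of size $\kappa^+$, $\kappa\in M$, and $\mathcal{A}\of M$; and $U=\set{A\in M\mid\kappa\in j_M(A)}$ is a $\kappa$-complete nonprincipal filter measuring every member of $\mathcal{A}$.

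The main obstacle, and the reason only weak measurability survives, is the second lifting move. To lift the full embedding $j:V\to M_0$ through $\Add(\kappa,\kappa^{++})$ one would need an $M_0$-generic for $\Add(j(\kappa),j(\kappa^{++}))$, a poset with about $\kappa^{++}$ dense sets over $M_0$ but closure only about $\kappa^+$---indeed $\cf^V(j(\kappa))=\kappa^+$---so a length-$\kappa^{++}$ diagonalization would stall at its cofinality-$\kappa^+$ limits; this is exactly why $\kappa$ need not remain measurable. Passing to the size-$\kappa^+$ models $W$ and $j(W)$ is what saves the argument: there are now only $\kappa^+$ dense sets to meet, so every diagonalization has length $\kappa^+$ and all of its limit stages have cofinality $\leqkappa$, comfortably within the $\ltkappa$-closure of the posets involved. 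The delicate bookkeeping will be in checking that $j(W)$ and its relevant generic extension stay closed under $\kappa$-sequences, so that these limit lower bounds can be found inside the model.
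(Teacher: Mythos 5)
Your architecture is the right one and largely parallels the paper's: kill weak compactness below $\kappa$ with Souslin trees, use Kunen's lemma on spare Cohen coordinates to realize the stage-$\kappa$ forcing of $j(\P_\kappa)$, lift an embedding, and read off the filter. Your decomposition (forcing $\Add(\gamma,1)*\dot\S_\gamma$ below $\kappa$ and lifting $\Add(\kappa,Y)$ separately over a size-$\kappa^+$ hull $W\prec H_\Theta$, versus the paper's device of forcing $\Add(\gamma,\gamma^+)*\dot\S_\gamma$ below $\kappa$ so that $g\restrict\kappa^+$ is absorbed into stage $\kappa$ of $j(\P_\kappa)$ and the full ultrapower is lifted) is a workable variant. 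But there is a genuine gap at the master condition step: $q=\bigcup j\image(g\restrict Y)$ is \emph{not} a single condition of $j(\Add(\kappa,Y))$, because it is not an element of the target model. This is qualitatively different from theorem \ref{Theorem.LeastWC=LeastUnfoldable}, where $\bigcup j\image g_0=\bigcup g_0$ since every coordinate of a condition in $\Add(\kappa,1)$ lies below the critical point. Here the conditions in $g\restrict Y$ mention ordinals of $Y$, which $j$ moves, so the domain of $q$ is essentially $j\image Y\times\kappa$; composing with (the $j$-image of) the increasing enumeration of $Y$, one recovers $j\image\kappa^+$ from $q$. That set is cofinal in $j(\kappa^+)$ of order type $\kappa^+<j(\kappa)$, and so it cannot lie in $M_0$, in $M_0[j(G)]$, in $j(W)[j(G)]$, or in its transitive collapse $N[j_M(G)]$: in each of these models $j(\kappa^+)$ (respectively its collapse) is the regular successor of $j(\kappa)$, and this regularity survives the $j(\kappa^+)$-c.c.\ forcing $j(\P_\kappa)$. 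Passing to the size-$\kappa^+$ models $W$ and $j(W)$ does cut the number of antichains down to $\kappa^+$, which you genuinely need, but it does nothing to put $j\image\kappa^+$ into the target model; the condition you propose to diagonalize below simply does not exist. (Your closing diagnosis also misplaces the obstruction to full measurability: the master-condition problem already bites at $\kappa^+$ coordinates, not only at $\kappa^{++}$.)

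The repair is the paper's ``master filter'' argument, which is the one genuinely new idea in this proof beyond theorem \ref{Theorem.LeastWC=LeastUnfoldable}. One never forms $\bigcup j\image(g\restrict Y)$; instead one builds in $V[G][g]$ a descending $\kappa^+$-sequence of honest conditions $q_\alpha\in\Add(j(\kappa),j(\kappa^+))$, each individually compatible with every element of $j\image(g\restrict Y)$, meeting every maximal antichain of the target model. This works because (i) by the chain condition and the fact that $j(\kappa^+)=\sup j\image\kappa^+$, every maximal antichain is contained in $\Add(j(\kappa),j(\beta))$ for some $\beta<\kappa^+$, and (ii) for each such $\beta$ the \emph{partial} master condition $\bigcup j\image(g\restrict\beta)$ does lie in the target model, since $j\image\beta$ is a $\kappa$-sized set and hence belongs to $M_0$ (and to $j(W)$); so one can meet the antichain by extending only on coordinates below $j(\beta)$ without losing compatibility with the rest of $j\image(g\restrict Y)$. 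The filter generated by the $q_\alpha$ is then generic and contains $j\image(g\restrict Y)$, and with this substitution the rest of your small-model argument goes through.
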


\begin{proof}
Suppose $\kappa$ is a measurable cardinal in $V$. By forcing if necessary, we may also assume that $2^\kappa=\kappa^+$ in $V$, since the forcing to achieve this adds no subsets to $\kappa$ and consequently preserves the measurability of $\kappa$. We define a forcing iteration $\P=\langle (\P_\gamma,\dot{\Q}_\gamma)\mid\gamma\leq\kappa\rangle$ of length $\kappa+1$ using Easton support as follows. The forcing $\dot\Q_\gamma$ at stage $\gamma$ is trivial, unless $\gamma$ is an inaccessible cardinal in $V[G_\gamma]$, in which case $\dot{\Q}_\gamma$ names the two-step iteration $\Add(\gamma,\gamma^+)*\dot\S_\gamma$ in $V[G_\gamma]$ that adds a Cohen subset to $\gamma$ and then adds a homogeneous $\gamma$-Souslin tree via the forcing of lemma \ref{Lemma.Souslin+Branch=Cohen}. At the top of the iteration, at stage $\kappa$, let $\dot{\Q}_\kappa=\dot{\Add}(\kappa,\kappa^{++})$ be a $\P_\kappa$-name for the forcing to add $\kappa^{++}$ many Cohen subsets to $\kappa$ as defined in $V^{\P_\kappa}$. Suppose that $G*g\of\P_\kappa*\dot{\Q}_\kappa$ is $V$-generic for this forcing, and consider the model $V[G][g]$. As in the proof of theorem \ref{Theorem.LeastWC=LeastUnfoldable}, one can easily argue that there are no weakly compact cardinals less than $\kappa$ in $V[G][g]$.

It remains to show that $\kappa$ remains weakly measurable in $V[G][g]$. For this, fix any $A\of P(\kappa)^{V[G][g]}$ of size at most $\kappa^+$ in $V[G][g]$. Since the forcing $\Add(\kappa,\kappa^{++})$ at stage $\kappa$ is $\kappa^+$-c.c. in $V[G]$, it follows that $A\in V[G][g\restrict\delta]$ for some $\delta<\kappa^{++}$, where $g\restrict\delta=g\intersect\Add(\kappa,\delta)$ is the natural restriction of the forcing to add only the first $\delta$ many subsets of $\kappa$. Since $\delta$ has cardinality at most $\kappa^+$, we may apply an automorphism of the forcing and thereby assume without loss of generality that $\delta=\kappa^+$. Thus, $A=\Adot_{G*g\restrict\kappa^+}$ for some $\P_\kappa*\dot{\Add}(\kappa,\kappa^+)$-name $\Adot$, and we may assume furthermore that $\Adot\in H_{\kappa^{++}}^V$. Let $j:V\to M$ be the ultrapower by a normal measure on $\kappa$ in $V$, so $j$ is an elementary embedding of $V$ into a transitive class $M$ and $j$ has critical point $\kappa$. Our strategy will be to lift this embedding to the partial forcing extension $j:V[G][g\restrict\kappa^+]\to M[j(G)][j(g\restrict\kappa^+)]$, defining the lift in the full extension $V[G][g]$. If we can do this, then since $A=\Adot_{G*g\restrict\kappa^+}$, it will follow that $A$ is a subset of the domain, and therefore will be measured by the filter induced by this embedding, that is, by the filter $F$ generated by $\set{X\of\kappa\st X\in V[G][g\restrict\kappa^+]\text{ and }\kappa\in j(X)}$.  Notice that $F$ will be $\kappa$-complete in $V[G][g]$, since the domain is closed under $\ltkappa$-sequences in $V[G][g]$, that is, $V[G][g\restrict\kappa^+]^\ltkappa\of V[G][g\restrict\kappa^+]$ in $V[G][g]$ and $\cp(j)=\kappa$.  In this way, we will have verified this instance of weak measurability and thereby be able to conclude that $\kappa$ is weakly measurable in $V[G][g]$, as desired.

So let's lift the embedding. Specifically, consider first the forcing $j(\P_\kappa)$, which is isomorphic in $M$ to $\P_\kappa*\dot{\Add}(\kappa,\kappa^+)*\dot{\S}_\kappa*\dot{\P}_\tail$, where $\dot{\P}_{\tail}$ is a $\P*\dot{\Add}(\kappa,\kappa^+)*\dot{\S}_\kappa$-name for the tail of the iteration $j(\P_\kappa)$ beyond stage $\kappa$. We have an $M$-generic filter $G*(g\restrict\kappa^+)\of\P_\kappa*\dot{\Add}(\kappa,\kappa^+)$. Let $B$ be one of the Cohen sets added by $g$ beyond the part $g\restrict\kappa^+$ that we have used so far. By lemma \ref{Lemma.Souslin+Branch=Cohen}, we may view $B$ as $B=T*b\of\S_\kappa*\dot T$, where $T$ is a generic homogeneous $\kappa$-Souslin tree, mutually generic with the rest of $g$. Thus, $G*(g\restrict\kappa^+)*T$ is $M$-generic for the forcing of $j(\P_\kappa)$ up to and including stage $\kappa$. The remaining forcing $\Ptail=(\dot{\P}_{\tail})_{G*(g\restrict\kappa^+)*T}$ is strategically $\ltkappa^+$-closed in $M[G*(g\restrict\kappa^+)*T]$, a model which is closed under $\kappa$-sequences in $V[G*(g\restrict\kappa^+)*T]$ by virtue of $M$ exhibiting this closure in $V$ and $j(\P)_{\kappa + 1}$ being $\kappa^+$-c.c. Since $\P_\kappa$ has at most $2^{\kappa}$ many (dense) subsets, it follows by elementarity that $j(\P_\kappa)$ (and hence $j(\P)_{\kappa + 1}$) will have at most $|j(2^{\kappa})|^V = |j(\kappa^+)|^V=\kappa^+$ many (dense) subsets in $M$, as counted in $V$. We may therefore construct an $M[G*(g\restrict\kappa^+)*T]$-generic filter $\Gtail\of\Ptail$ by a diagonalization argument in $V[G*(g\restrict\kappa^+)*T]$. Thus, we may lift the embedding to $j:V[G]\to M[j(G)]$ in $V[G*(g\restrict \kappa^+)*T]$, where $j(G)=G*(g\restrict\kappa^+)*T*\Gtail$.

We now lift the embedding to $V[G][g\restrict\kappa^+]$ in $V[G][g]$. Notice that the partial order $j(\Add(\kappa,\kappa^+))=\Add(j(\kappa),j(\kappa^+))^{M[j(G)]}$ is ${<}j(\kappa)$-directed closed in $M[j(G)]$, a model that is closed under $\kappa$-sequences in $V[G*(g\restrict\kappa^+)*T]$. In lieu of a master condition argument we will construct a ``master filter'' by building a decreasing sequence of increasingly masterful conditions. For any $\beta<\kappa^+$, we have $j\image\beta\in M$ and consequently $j\image (g\restrict\beta)\in M[j(G)]$. Furthermore, if $a\of\Add(j(\kappa),j(\kappa^+))$ is a maximal antichain in $M[j(G)]$, then by the chain condition it follows that $|a|^{M[j(G)]}\leq j(\kappa)$ and consequently $a\of\Add(j(\kappa),\delta)$ for some $\delta<j(\kappa^+)$. Since $j$ is the ultrapower by a normal measure, it follows that $j(\kappa^+)=\sup j\image\kappa^+$, and so $a\of\Add(j(\kappa),j(\beta))$ for some $\beta<\kappa^+$. Suppose that we have a condition $q\in\Add(j(\kappa),j(\kappa^+))$ that is compatible with each element of $j\image (g\restrict\kappa^+)$. In particular, $q\restrict j(\beta)$ is compatible in $\Add(j(\kappa),j(\beta))$ with the master condition $\bigcup j\image (g\restrict\beta)\in \Add(j(\kappa),j(\beta))$, and so we may find an extension $q^+$ of $q$, adding to the domain of $q$ only below $j(\beta)$, such that $q^+$ meets $a$ and $q^+$ remains compatible with every element of $j\image (g\restrict\kappa^+)$. Since $|j(2^\kappa)|^V=|j(\kappa^+)|^V=\kappa^+$, we may enumerate in $V[G][g]$ all of the maximal antichains of $M[j(G)]$ in a $\kappa^+$-sequence, and then by iteratively applying the observation of the previous sentence, we build a continuous descending $\kappa^+$-sequence of conditions $q_\alpha\in\Add(j(\kappa),j(\kappa^+))$, such that every $q_\alpha$ is compatible with $j\image (g\restrict\kappa^+)$ and $q_{\alpha+1}$ meets the $\alpha^\th$ enumerated antichain.  Note that we are using the fact that $M[j(G)]$ is closed under $\kappa$-sequences in $V[G*(g\restrict\kappa^+)*T]$ and that $\Add(j(\kappa),j(\kappa^+))^{M[j(G)]}$ is ${<}\kappa$-directed closed to justify the construction at limit stages.  It follows that the filter $h$ generated by the $q_\alpha$'s is what we call a \emph{master filter}, meaning that it is $M[j(G)]$-generic and contains $j\image(g\restrict\kappa^+)$. Thus, $j$ lifts to $j:V[G][g\restrict\kappa^+]\to M[j(G)][j(g\restrict\kappa^+)]$, where $j(g\restrict\kappa^+)=h$. As we explained in the previous paragraph, it now follows, since $A\of V[G][g\restrict \kappa^+]$, that in $V[G][g]$ we have a $\kappa$-complete filter $F$ measuring every set in $A$, thereby witnessing this instance of weak measurability. So $\kappa$ is weakly measurable in $V[G][g]$, as desired.

\end{proof}

By combining the methods of theorems \ref{Theorem.LeastWC=LeastUnfoldable} and \ref{Theorem.LeastWC=LeastWM}, we
show that the least weakly compact cardinal can be simultaneously unfoldable and weakly measurable.

\begin{theorem}\label{Theorem.LeastWC=LeastUnf=LeastWM}
 If $\kappa$ is measurable and strongly unfoldable, then there is a forcing extension in which $\kappa$ is the least weakly compact cardinal, unfoldable and weakly measurable.
\end{theorem}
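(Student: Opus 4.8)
The plan is to run a single forcing iteration that simultaneously supports the superunfoldability lifting argument from the proof of theorem~\ref{Theorem.LeastWC=LeastUnfoldable} and the ultrapower lifting argument from the proof of theorem~\ref{Theorem.LeastWC=LeastWM}. First I would, by a preliminary reverse-Easton preparation, arrange that the \GCH\ holds while preserving both the measurability and the strong unfoldability of $\kappa$ (standard), and---for convenience, to be removed at the end exactly as in theorem~\ref{Theorem.LeastWC=LeastUnfoldable}---that there are no inaccessible cardinals above $\kappa$. The crucial design decision is which stage-$\gamma$ forcing to use below $\kappa$. Since the weak measurability argument requires the pulled-back stage-$\kappa$ forcing of $j(\P_\kappa)$ to be $\Add(\kappa,\kappa^+)*\dot\S_\kappa$, so that $g\restrict\kappa^+$ can serve as its generic, I would define the Easton-support iteration $\P=\langle(\P_\gamma,\dot\Q_\gamma)\mid\gamma\le\kappa\rangle$ with $\dot\Q_\gamma=\Add(\gamma,\gamma^+)*\dot\S_\gamma$ at each inaccessible $\gamma<\kappa$ and $\dot\Q_\kappa=\Add(\kappa,\kappa^{++})$ at the top, precisely as in theorem~\ref{Theorem.LeastWC=LeastWM}. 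Forcing with $G*g\of\P_\kappa*\dot\Q_\kappa$, the argument that there are no weakly compact cardinals below $\kappa$ in $V[G][g]$ is verbatim, since the stage-$\gamma$ Souslin trees survive the strategically $\leqgamma$-closed tail forcing.

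It then remains to verify, in the single model $V[G][g]$, both that $\kappa$ is weakly measurable and that $\kappa$ is unfoldable; these are independent verifications of the same model and do not interact. The weak measurability of $\kappa$ follows by repeating the proof of theorem~\ref{Theorem.LeastWC=LeastWM} word for word: given a family $A$ of at most $\kappa^+$ subsets of $\kappa$, the $\kappa^+$-c.c.\ lets me assume $A\in V[G][g\restrict\kappa^+]$; I lift the ultrapower $j:V\to M$ through $\P_\kappa$ using $g\restrict\kappa^+$ together with a homogeneous $\kappa$-Souslin tree $T$ split off a later Cohen block by lemma~\ref{Lemma.Souslin+Branch=Cohen}, diagonalize to build the tail-generic $\Gtail$, and then build the master filter $h$ for $\Add(j(\kappa),j(\kappa^+))$, obtaining a $\kappa$-complete filter $F$ measuring every set in $A$.

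For the unfoldability of $\kappa$ I would rerun the proof of theorem~\ref{Theorem.LeastWC=LeastUnfoldable}, adapted to the larger stage forcing $\Add(\gamma,\gamma^+)$ below $\kappa$. Given a single $A\of\kappa$ and a sufficiently large $\theta$ (which we may take to be a beth fixed point), the $\kappa^+$-c.c.\ together with the fact that a single subset of $\kappa$ depends on only $\kappa$ many coordinates lets me, after an automorphism, take $A=\dot A_{G*g_0}$ for a $\P_\kappa*\Add(\kappa,1)$-name $\dot A\in H_{\kappa^+}$, where $g_0$ is the Cohen set on the first coordinate. Using that $\kappa$ is superunfoldable, I obtain $j:M\to N$ with $\dot A\in M\prec H_{\kappa^+}$, $\theta<j(\kappa)$, $N^\theta\of N$ and $|N|^V=\theta^+$. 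The only change from theorem~\ref{Theorem.LeastWC=LeastUnfoldable} is that the stage-$\kappa$ forcing appearing in $j(\P_\kappa)$ is now $\Add(\kappa,\kappa^+)*\dot\S_\kappa$ rather than $\Add(\kappa,1)*\dot\S_\kappa$; I would supply its generic on the $N$-side by using $g\restrict\kappa^+$ together with a Souslin tree $T$ split off a later Cohen block, noting that $N$, of size $\theta^+\ge\kappa^{++}$, accommodates this even though $M\prec H_{\kappa^+}$ does not see $\Add(\kappa,\kappa^+)$. Since there are no inaccessibles in $(\kappa,\theta]$, the tail $\Ptail$ is strategically $\leqtheta$-closed, and I can diagonalize to build $\Gtail$ and lift $j:M[G]\to N[j(G)]$; then the master condition $\bigcup g_0$ for $\Add(j(\kappa),1)^{N[j(G)]}$ lets me lift through the single coordinate $g_0$ to $j:M[G][g_0]\to N[j(G)][j(g_0)]$. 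As $A\in M[G][g_0]$ and $\theta<j(\kappa)$, this witnesses unfoldability in $V[G][g]$.

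The main obstacle, and the point that makes the combination nontrivial rather than a mere juxtaposition, is exactly this reconciliation: the weak measurability argument forces the below-$\kappa$ forcing to add $\gamma^+$ Cohen subsets at each stage, which enlarges the stage-$\kappa$ pullback in the unfoldability argument from $\Add(\kappa,1)*\dot\S_\kappa$ to $\Add(\kappa,\kappa^+)*\dot\S_\kappa$. The observation that resolves it is that a single set $A$ still has a name in $H_{\kappa^+}$, so the elementary submodel $M\prec H_{\kappa^+}$ of size $\kappa$ still suffices and need only be lifted through $\P_\kappa$ and the one coordinate $\Add(\kappa,1)$, while the extra $\Add(\kappa,\kappa^+)$-generic is absorbed entirely on the image side $N$. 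Finally, since $\kappa$ is weakly compact (being unfoldable) in $V[G][g]$ while no smaller cardinal is, $\kappa$ is the least weakly compact cardinal there, and the convenience assumption that there are no inaccessibles above $\kappa$ is removed precisely as in theorem~\ref{Theorem.LeastWC=LeastUnfoldable}, completing the proof.
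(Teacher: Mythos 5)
Your proposal is correct and follows essentially the same route as the paper: the same iteration (the one from theorem \ref{Theorem.LeastWC=LeastWM}, with $\Add(\gamma,\gamma^+)*\dot\S_\gamma$ below $\kappa$ and $\Add(\kappa,\kappa^{++})$ on top), a verbatim repetition of the weak measurability lifting, and the superunfoldability lifting in which the enlarged stage-$\kappa$ forcing $\Add(\kappa,\kappa^+)*\dot\S_\kappa$ of $j(\P_\kappa)$ is absorbed on the $N$-side via $(g\restrict\kappa^+)*T$ while the domain is still only lifted through $\P_\kappa$ and one Cohen coordinate. Your closing observation that this reconciliation is the only genuinely new point is exactly the remark the paper itself makes ("the difference between adding one Cohen subset to $\kappa$ or $\kappa^+$ many\dots is not a difference that fundamentally affects the argument"), relying as you do on the $\theta$-closure of $N$ to compute $\kappa^+$ correctly.
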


\begin{proof}
We combine the arguments of theorems \ref{Theorem.LeastWC=LeastUnfoldable} and \ref{Theorem.LeastWC=LeastWM}. Suppose that $\kappa$ measurable and strongly unfoldable. By forcing (or by chopping the universe off), if necessary, we may assume that there are no inaccessible cardinals above $\kappa$, and similarly by forcing we may assume that the \GCH\ holds. Let $\P$ be the length $\kappa+1$ Easton support iteration of theorem \ref{Theorem.LeastWC=LeastWM}, which forces with $\Add(\gamma,\gamma^+)*\dot{\S}_\gamma$ at inaccessible stages $\gamma<\kappa$, and $\Q_\kappa=\Add(\kappa,\kappa^{++})$ at stage $\kappa$. Suppose that $V[G][g]$ is the corresponding extension. The argument of theorem \ref{Theorem.LeastWC=LeastWM} shows that $\kappa$ is the least weakly compact cardinal and weakly measurable in $V[G][g]$. Let us argue also that it is unfoldable there. Fix any $A\of\kappa$ in $V[G][g]$ and any large beth fixed point $\theta=\beth_\theta$. Once again, we may assume $A\in V[G][g_0]$, where $g_0$ is the first Cohen set added by $g$ at stage $\kappa$. Thus, there is a name $\dot A\in V$ of hereditary size $\kappa$ such that $A=\dot A_{G*g_0}$. Since $\kappa$ is strongly unfoldable, it is also superunfoldable and so there is $j:M\to N$, where $M^\ltkappa\of M\satisfies\ZFC$ and $M\in H_{\kappa^+}$, such that $N^\theta\of N$ and $|N|^V=\beth_{\theta+1}=\theta^+$ and $\theta<j(\kappa)$. As in the proof of theorem \ref{Theorem.LeastWC=LeastUnfoldable}, we may lift $j$ to $j:M[G][g_0]\to N[j(G)][j(g_0)]$, where $j(G)=G*(g\restrict\kappa^+)*T*\Gtail$, where $T$ is obtained via lemma \ref{Lemma.Souslin+Branch=Cohen} from one of the Cohen sets added by $g$ beyond $\kappa^+$. The point is that the difference between adding one Cohen subset to $\kappa$ or $\kappa^+$ many Cohen subsets to $\kappa$ at stage $\kappa$ in $j(\P)$ is not a difference that fundamentally affects the argument of theorem \ref{Theorem.LeastWC=LeastUnfoldable}. The result is that this lifted embedding witnesses the desired instance of unfoldability for $\kappa$, and so $\kappa$ remains unfoldable in $V[G][g]$.
\end{proof}

The hypothesis of a strongly unfoldable measurable cardinal is considerably stronger than the hypothesis merely of a measurable cardinal, since strongly unfoldable cardinals are $\Sigma_2$-reflecting and therefore reflect the existence of the measurable cardinal below. It seems likely to us that the consistency strength of the hypothesis of theorem \ref{Theorem.LeastWC=LeastUnf=LeastWM} can be improved.  Moreover, as was the case in the proof of theorem \ref{Theorem.LeastWC=LeastUnfoldable}, we did not necessarily use all aspects of the strongly unfoldability of $\kappa$.  We used the additional property mentioned after the proof and also the fact that the targets of the embeddings were closed under $\kappa$ sequences to ensure that the stage $\kappa$ forcing of $j(\P)$ was the $\text{Add}(\kappa, \kappa^+)$ of $V[G]$.  In fact, it would have been sufficient to have assumed that $N$ were only closed under $\ltkappa$ sequences because the stage $\kappa$ forcing would then be $\text{Add}(\kappa, \gamma)^{V[G]}$ for some $\gamma \leq \kappa^+$ whereby we could use $(g\restrict\gamma)*T$ for the generic over this forcing instead.

\section{The least weakly compact cardinal can be nearly $\theta$-supercompact}\label{Section.LeastWCisNearlySC}

Next, we prove that the least weakly compact cardinal can be nearly $\theta$-supercompact. A cardinal $\kappa$ is {\df nearly $\theta$-supercompact} if for every $A\of\theta$, there is a transitive set $M \models\ZFC$, with $M^\ltkappa\of M$ and $A, \kappa, \theta\in M$, and another transitive set $N$ with an elementary embedding $j: M \to N$ having critical point $\kappa$, such that $\theta<j(\kappa)$ and $j\image\theta \in N$. The notion was introduced by Schanker in \cite{Schanker2011:Dissertation, Schanker2013:PartialNearSupercompactness}, where he also provided several useful equivalent formulations. For example, when $\theta^\ltkappa=\theta$ one may equivalently assume that $M$ and $N$ have size $\theta$ and are closed under $\ltkappa$-sequences, and furthermore it suffices to have models of $\ZFC^-$ instead of \ZFC; but one should understand $\ZFC^-$, as always, in the manner of \cite{GitmanHamkinsJohnstone:WhatIsTheTheoryZFC-Powerset?}. Another characterization, the normal filter characterization, is that $\kappa$ is nearly $\theta$-supercompact for $\theta=\theta^\ltkappa$ if and only if whenever $\mathcal{A}$ is a collection of $\theta$ many subsets of $P_\kappa\theta$ and $\mathcal{F}$ is a family of $\theta$ many functions from $P_\kappa\theta$ into $\theta$, there is a $\kappa$-complete $\mathcal{F}$-normal fine filter on $P_{\kappa}\theta$, meaning that every $f \in \mathcal{F}$ that is regressive on a set in the filter is constant on a set in the filter.

\begin{theorem}\label{Theorem.LeastWCNearlySC}
 If $\kappa$ is nearly $\theta$-supercompact and $\theta^{{<}\kappa} = \theta$, then there is a forcing extension, preserving all cardinals above $\kappa$, in which $\kappa$ is the least weakly compact cardinal and still nearly $\theta$-supercompact.
\end{theorem}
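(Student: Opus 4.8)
The plan is to adapt the forcing iteration and lifting strategy of Theorem~\ref{Theorem.LeastWC=LeastWM} to the near $\theta$-supercompactness setting, accommodating the larger object $\theta$ and the requirement $j\image\theta\in N$. First I would define an Easton-support iteration $\P=\langle(\P_\gamma,\dot\Q_\gamma)\mid\gamma\leq\kappa\rangle$ of length $\kappa+1$ that at each stage $\gamma<\kappa$ which is inaccessible in $V[G_\gamma]$ forces with $\Add(\gamma,\gamma^+)*\dot\S_\gamma$, adding a Cohen subset and then a homogeneous $\gamma$-Souslin tree via lemma~\ref{Lemma.Souslin+Branch=Cohen}, exactly as before. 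At the top stage $\kappa$, since we now wish to preserve near $\theta$-supercompactness, the final forcing $\dot\Q_\kappa$ should be $\dot{\Add}(\kappa,\theta^+)$, adding enough Cohen subsets so that any given $A\of\theta$ and its associated name live in an initial segment of the top forcing of size at most $\theta$. The argument that there are no weakly compact cardinals below $\kappa$ in $V[G][g]$ carries over verbatim from theorem~\ref{Theorem.LeastWC=LeastUnfoldable}: each inaccessible $\gamma<\kappa$ acquires a $\gamma$-Souslin tree which survives the strategically $\leqgamma$-closed tail and violates the tree property.

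The heart of the proof is verifying that $\kappa$ remains nearly $\theta$-supercompact in $V[G][g]$. Given $A\of\theta$ in $V[G][g]$, by the chain condition of the top forcing I would first reduce to the case that $A$ (and a suitable name $\dot A$ of size $\theta$) appears in $V[G][g\restrict\delta]$ for some $\delta\leq\theta^+$, and using $\theta^{{<}\kappa}=\theta$ together with an automorphism of $\Add(\kappa,\theta^+)$, arrange that $A\in V[G][g\restrict\theta]$ or similar, so that $\dot A\in H_{\theta^+}^V$. Then, invoking near $\theta$-supercompactness in $V$ (in the filter- or embedding-characterization with models of size $\theta$ closed under $\ltkappa$-sequences, valid because $\theta^\ltkappa=\theta$), I obtain $j:M\to N$ with critical point $\kappa$, $\theta<j(\kappa)$, $j\image\theta\in N$, $\dot A\in M$, $M^\ltkappa\of M$, and $N,M$ of size $\theta$. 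The goal is to lift $j$ to $j:M[G][g\restrict\theta]\to N[j(G)][j(g\restrict\theta)]$, defining the lift inside $V[G][g]$, and then read off the desired near-supercompactness embedding, using that $A\of\theta$ lies in the domain and $j\image\theta\in N[j(G)]$ can be recovered.

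The lifting proceeds in stages as in theorem~\ref{Theorem.LeastWC=LeastWM}. The forcing $j(\P_\kappa)$ factors as $\P_\kappa*\dot{\Add}(\kappa,\theta^+)*\dot\S_\kappa*\dot{\P}_\tail$; I use $G*(g\restrict\theta^+)$ for the initial segment, view one of the remaining Cohen blocks as $T*b$ via lemma~\ref{Lemma.Souslin+Branch=Cohen} to supply an $N$-generic Souslin tree $T$ at stage $\kappa$, and then build an $N[G*(g\restrict\theta^+)*T]$-generic filter $\Gtail\of\Ptail$ for the strategically closed tail by diagonalization. The counting that makes the diagonalization work should come out to $|j(\theta^+)|^V$-many dense sets, which by $j\image\theta\in N$ and $\theta^{{<}\kappa}=\theta$ is manageable in $V[G][g]$, yielding $j:M[G]\to N[j(G)]$. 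The remaining and most delicate step is lifting through the top forcing $\Add(\kappa,\theta^+)$: I would construct a master filter $h$ by the same decreasing-sequence-of-masterful-conditions technique, building a continuous descending sequence that meets every maximal antichain of $N[j(G)]$ while staying compatible with $j\image(g\restrict\theta^+)$, using the $\ltkappa$-directed closure of $\Add(j(\kappa),j(\theta^+))^{N[j(G)]}$ and the closure of $N[j(G)]$ under the relevant sequences to handle limit stages.

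The main obstacle I anticipate is the master-filter construction for the top forcing in the near-supercompact setting, where $\theta$ may be much larger than $\kappa$. Unlike the measurable case, where $j(\kappa^+)=\sup j\image\kappa^+$ let maximal antichains be localized below some $j(\beta)$ with $\beta<\kappa^+$, here one must ensure that each maximal antichain of $N[j(G)]$ is contained in $\Add(j(\kappa),j(\beta))$ for some $\beta<\theta$, which requires $j(\theta^+)=\sup j\image\theta^+$ or an analogous continuity of $j$ at $\theta^+$ and the availability of the master conditions $\bigcup j\image(g\restrict\beta)\in N[j(G)]$. Establishing that $j\image(g\restrict\beta)\in N[j(G)]$ for each $\beta<\theta^+$, so that these unions are legitimate conditions, is where the hypothesis $j\image\theta\in N$ and the precise near-supercompactness equivalents become essential, and I expect the bookkeeping verifying that the entire construction—including the count of antichains and the closure needed at limits—can be carried out inside $V[G][g]$ to be the technically demanding core of the argument.
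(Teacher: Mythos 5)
Your overall blueprint (kill weak compactness below $\kappa$ with Souslin trees, absorb a spare Cohen block as $T*b$ via lemma \ref{Lemma.Souslin+Branch=Cohen}, lift $j$ through $\P_\kappa$, the tree, and the top forcing) is the right one, but two of your specific choices break down, and the paper's proof is organized differently precisely to avoid them. First, your iteration forces with $\Add(\gamma,\gamma^+)*\dot\S_\gamma$ at inaccessible stages, so by elementarity the stage-$\kappa$ forcing of $j(\P_\kappa)$ is only $\Add(\kappa,(\kappa^+)^{N[G]})*\dot\S_\kappa$, not $\Add(\kappa,\theta^+)*\dot\S_\kappa$ as you write; you are conflating the externally chosen top forcing $\Q_\kappa$ with the image under $j$ of the stages below $\kappa$. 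As a result only $\kappa^+$ many coordinates of $g$ are absorbed into $j(G)$, so $g\restrict\theta\notin N[j(G)]$ and neither the master condition $\bigcup j\image(g\restrict\theta)$ nor the sets $j\image(g\restrict\beta)$ for $\kappa^+\leq\beta<\theta$ are available in $N[j(G)]$; the lift through the top forcing cannot get started. The paper repairs this with a Menas function $f$ for near $\theta$-supercompactness: the stages below $\kappa$ force with $\Add(\gamma,f(\gamma))*\dot\S_\gamma$, and one chooses $j$ with $\beta=j(f)(\kappa)\geq\theta$, so that the stage-$\kappa$ forcing of $j(\P_\kappa)$ is $\Add(\kappa,\beta)$ with $\theta\leq\beta<(\theta^+)^V$ and $g\restrict\theta$ lands inside $N[j(G)]$. (Relatedly, your plan to lift through $\Add(\kappa,\theta^+)$ with a master filter is ill-posed: $M$ is transitive of size $\theta$, so $\theta^+\notin M$ and $j(\Add(\kappa,\theta^+))$ is undefined. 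The paper lifts only through $\Add(\kappa,\theta)$, using the genuine master condition $j\image(g\restrict\theta)$; the master-filter technique is reserved for theorem \ref{Theorem.LeastWC=LeastWM=LeastUnf=LeastNearlyThetaSC}, where full $\theta$-supercompactness supplies the closure it needs.)

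Second, your plan to build $\Gtail$ and the top-forcing generic by diagonalization \emph{inside} $V[G][g]$ fails for closure reasons. In theorem \ref{Theorem.LeastWC=LeastWM} the target $M$ of the ultrapower is closed under $\kappa$-sequences in $V$ and there are only $\kappa^+$ many dense sets to meet, so the diagonalization through a strategically ${<}\kappa^+$-closed poset goes through. Here $N$ is merely a ${<}\kappa$-closed set of size $\theta$, so although $\Ptail$ is strategically $\leq\theta$-closed in $N[G][g\restrict\beta][T]$, you cannot run the strategy externally for $\theta$ many steps: at limit stages the partial play need not belong to $N[\ldots]$. The paper sidesteps this entirely: it genuinely \emph{forces} over $V[G][g]$ to add $\Gtail$ and then $h$ below the master condition, obtains the lift only in $V[G][g][\Gtail][h]$, and works with the $\mathcal{F}$-normal-filter characterization of near $\theta$-supercompactness. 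The induced filter $\mu$ on the ($\theta$-sized, suitably closed) Boolean algebra $\mathcal{A}$ is shown to lie in $N[j(G)][h]$ (using $j\image\theta\in N$ to form $j\restrict\mathcal{A}$), and since $\Ptail*j(\Q_\kappa)$ is strategically $\leq\theta$-closed over $N[G][g\restrict\beta][T]$ it could not have added the $\theta$-sized set $\mu$; hence $\mu\in N[G][g\restrict\beta][T]\of V[G][g]$ after all. This pull-back-the-filter step is the key idea your proposal is missing, and without it (or some substitute for the absent closure of $N$) the verification that the witnessing object lives in $V[G][g]$ does not go through.
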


\begin{proof}
We adapt the method of theorems \ref{Theorem.LeastWC=LeastUnfoldable} and \ref{Theorem.LeastWC=LeastWM}. Suppose $\kappa$ is nearly $\theta$-supercompact and $\theta^{{<}\kappa} = \theta$. Results in \cite{Schanker2011:Dissertation,Schanker2013:PartialNearSupercompactness} show that there is function $f:\kappa\to\kappa$ with the near $\theta$-supercompactness Menas property, which means that one may find a near $\theta$-supercompactness embedding $j:M\to N$ with the additional property that $\theta<j(f)(\kappa)$. We use $f$ to define an iteration $\P=\langle (\P_\gamma,\dot{\Q}_\gamma)\mid\gamma\leq\kappa\rangle$ of length $\kappa+1$ using Easton support as follows. If $\gamma$ is inaccessible in $V[G_\gamma]$, then $\dot\Q_\gamma$ names the two-step iteration $\Add(\gamma,f(\gamma))*\dot\S_\gamma$ that adds $f(\gamma)$ many Cohen subsets to $\gamma$ and then adds a homogeneous $\gamma$-Souslin tree via the forcing of lemma \ref{Lemma.Souslin+Branch=Cohen}, and otherwise names the trivial forcing. At the top of the iteration, at stage $\kappa$, let $\dot{\Q}_\kappa=\dot{\Add}(\kappa,\theta^+)$ be a $\P_\kappa$-name for the forcing to add $\theta^+$ many Cohen subsets to $\kappa$, as defined in $V^{\P_\kappa}$. Let $G*g\of\P_\kappa*\dot{\Q}_\kappa$ be $V$-generic, and consider $V[G][g]$. Since the forcing $\P_\kappa*\dot{\Q}_\kappa$ is $\kappa^+$-c.c., all cardinals above $\kappa$ are preserved. As before, since the forcing at any stage $\gamma<\kappa$ remaining inaccessible in the partial extension $V[G_{\gamma}]$ explicitly adds a $\gamma$-Souslin tree, which survives into $V[G][g]$ since the subsequent forcing is strategically $\leqgamma$-closed, it follows that $V[G][g]$ has no weakly compact cardinals below $\kappa$.

Consider now $\kappa$ itself, which we aim to show is nearly $\theta$-supercompact in $V[G][g]$. In order to do so, suppose in $V[G][g]$ that $\mathcal{A}$ is any collection of $\theta$ many subsets of $(P_{\kappa}\theta)^{V[G][g]}$ and $\mathcal{F}$ is a family of $\theta$ many functions from $(P_{\kappa}\theta)^{V[G][g]}$ into $\theta$. By extending $\mathcal{A}$ if necessary, we may assume that it is a $\ltkappa$-complete Boolean algebra of sets containing all the fineness cones $\set{\sigma\in (P_\kappa\theta)^{V[G][g]} \st \alpha\in \sigma}$ for any $\alpha<\theta$, and furthermore that it contains the preimages of any point under any function in $\mathcal{F}$. Since the forcing $\P_\kappa*\dot{\Q}_\kappa$ is $\kappa^+$-c.c., it follows that the sets $\mathcal{A}$, $\mathcal{F}$ and $(P_\kappa\theta)^{V[G][g]}$, which all have size $\theta$, are in $V[G][g\restrict\delta]$ for some $\delta<\theta^+$. By applying an automorphism to the stage $\kappa$ forcing, we may assume that $\delta=\theta$. Thus, there are names $\dot{\mathcal{A}},\dot{\mathcal{F}},\dot P$ of hereditary size $\theta$ in $V$ whose values by the filter $G*(g\restrict\theta)$ are $\mathcal{A}$, $\mathcal{F}$ and $(P_\kappa\theta)^{V[G][g]}$. Let $j:M\to N$ be a near $\theta$-supercompactness embedding in $V$ with critical point $\kappa$ between ${<}\kappa$-closed \ZFC\ models such that $\dot{\mathcal{A}},\dot{\mathcal{F}},\dot P,f \in M$ with $\theta<j(f)(\kappa)$.  In particular, this means that $N$ is a transitive model of size $\theta$, $j \image \theta \in N$, and $j(\kappa) > \theta$. Since $\kappa$ remains inaccessible in $V[G]$ and hence also in $N[G]$, there will be nontrivial forcing at stage $\kappa$ in $j(\P_\kappa)$, which we may therefore factor as $\P_\kappa*\dot{\Add}(\kappa,\beta)*\dot{\S}_\kappa*\dot{\P}_{\tail}$, where $\beta=j(f)(\kappa)$. Since $N$ has size $\theta$, it follows that $\beta<(\theta^+)^V$. Let $B\of\theta$ be one of the Cohen subsets added by $g$ beyond $\beta$. By lemma \ref{Lemma.Souslin+Branch=Cohen} we may regard $B\iso T*b$ as first adding a $\kappa$-Souslin tree $T$ and then forcing with that tree to add the branch $b$. Thus, the filter $G*(g\restrict\beta)*T$ is $N$-generic for the forcing of $j(\P_\kappa)$ up to and including stage $\kappa$. We now force with $\Ptail=(\dot{\P}_{\tail})_{G*(g\restrict\beta)*T}$ over $V[G][g]$ to add a generic filter $\Gtail\of\Ptail$, which will of course also be $N[G][g\restrict\beta][T]$-generic, and so we may lift the embedding in $V[G][g][\Gtail]$ to $j:M[G]\to N[j(G)]$, where $j(G)=G*(g\restrict\beta)*T*\Gtail$. The Menas property ensures $\theta\leq\beta$, and so $g\restrict\theta$ is also in $N[j(G)]$. Furthermore, using $j\image\theta$ and $g\restrict\theta$ in $N[j(G)]$ we may construct $j\image (g\restrict\theta)\in N[j(G)]$, and this is a condition in $j(\Q_\kappa)$, which is a master condition for the generic filter $g\restrict\theta\of\Add(\kappa,\theta)$. Let us now force with $j(\Add(\kappa,\theta))$ to add a $V[G][g][\Gtail]$-generic filter $h\of j(\Add(\kappa,\theta))$, containing this condition. It follows that $h$ is also $N[j(G)]$-generic and the embedding lifts in $V[G][g][\Gtail][h]$ to $j:M[G][g\restrict\theta]\to N[j(G)][h]$, where $j(g\restrict\theta)=h$. Since the stage $\kappa$ forcing of $j(\P)$ added $\beta$ many subsets to $\kappa$, the next inaccessible cardinal of $N[G][g\restrict\beta][T]$ will be beyond $\theta$, and consequently $\Ptail*j(\Q_\kappa)$ is strategically $\leqtheta$-closed in this model.

Let $\mu$ be the filter on $\mathcal{A}$ generated via the lifted embedding by $j\image\theta$; that is, $X\in\mu\iff X\in \mathcal{A}$ and $j\image\theta\in j(X)$. Since $\mathcal{A},\mathcal{F}\of M[G][g\restrict\theta]$, the usual supercompactness arguments show that $\mu$ is an $\mathcal{F}$-normal fine filter on $P_\kappa\theta$ measuring every set in $\mathcal{A}$. (Technically, $\mu$ is a filter on the Boolean algebra $\mathcal{A}$; one should close it under supersets to have a filter on the power set of $P_\kappa\theta$.) And since $M^\ltkappa\of M$, it follows easily that $M[G][g\restrict\theta]^\ltkappa\of M[G][g\restrict\theta]$ in $V[G][g]$ and even in $V[G][g][\Gtail][h]$, and so $\mu$ is a $\kappa$-complete filter. What remains for us to show is that this filter is actually in $V[G][g]$. For this, we observe first that from a $\theta$-enumeration of $\mathcal{A}$ in $M[G][g\restrict\theta]$, we may use $j\image\theta\in M$ to construct $j\image\mathcal{A}$ and hence also $j\restrict\mathcal{A}$ in $N[j(G)][h]$. From this observation, it follows that $\mu$, as we have defined it, is in $N[j(G)][h]$. But since $\Ptail*j(\Q_\kappa)$ is strategically $\leqtheta$-closed over the stage $\kappa$ extension $N[G][g\restrict\theta][T]$, the set $\mu$ could not have been added by this extra forcing over that model. Thus, $\mu\in N[G][g\restrict\beta][T]$. Since this model is contained in $V[G][g\restrict\beta][T]$, which is contained in $V[G][g]$, it follows that $\mu$ is in $V[G][g]$, and so we have witnessed this instance of near $\theta$-supercompactness in $V[G][g]$, as desired.
\end{proof}

From a stronger hypothesis, we can make the least weakly compact cardinal simultaneously unfoldable, weakly measurable, nearly $\theta$-supercompact and indeed, $\theta^+$-nearly $\theta$-supercompact.  In \cite{Schanker2011:Dissertation, Schanker2013:PartialNearSupercompactness}, Schanker defines a cardinal $\kappa$ to be {\df $\lambda$-nearly $\theta$-supercompact} provided that for every $A \subseteq \kappa^+$, there exists a transitive $M \satisfies \ZFC^-$ for which $\kappa, \theta, A \in M$ and $M^{\ltkappa} \subseteq M$, a transitive $N$, and an elementary embedding $j: M \to N$ with critical point $\kappa$ such that $j(\kappa) > \theta$ and $j \image \theta \in N$.  He then proved equivalent characterizations in the case that $\lambda^{\ltkappa} = \lambda$ and $\lambda \geq \theta$.  In this case, a cardinal $\kappa$ is $\lambda$-nearly $\theta$-supercompact exactly when for every collection $\mathcal{A}$ containing $\lambda$ many subsets of $P_\kappa\theta$ and every family $\mathcal{F}$ containing $\lambda$ many functions from $P_\kappa\theta$ into $\theta$, there is a $\kappa$-complete $\mathcal{F}$-normal fine filter on $P_{\kappa}\theta$.  For example, a cardinal $\kappa$ is weakly measurable if and only if it is $\kappa^+$-nearly $\kappa$-supercompact, and so the $\theta^+$-nearly $\theta$-supercompact cardinals naturally extend this.

\begin{theorem}\label{Theorem.LeastWC=LeastWM=LeastUnf=LeastNearlyThetaSC}
 If $\kappa$ is strongly unfoldable and $\theta$-supercompact, then there is a forcing extension, preserving all cardinals in the interval $[\kappa,\theta]$, in which $\kappa$ becomes the least weakly compact cardinal and is unfoldable, weakly measurable, nearly $\theta$-supercompact and indeed, $\theta^+$-nearly $\theta$-supercompact.
\end{theorem}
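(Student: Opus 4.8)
The plan is to fold the four iterations of theorems \ref{Theorem.LeastWC=LeastUnfoldable}--\ref{Theorem.LeastWCNearlySC} into a single forcing and then to verify each of the four large cardinal properties in the extension by lifting a separately tailored ground model embedding. I would begin with preparatory forcing: arrange the \GCH\ and $\theta^{{<}\kappa}=\theta$, and, crucially, destroy every inaccessible cardinal \emph{above} $\theta$ by the method of theorem \ref{Theorem.LeastWC=LeastUnfoldable} carried out above $\theta$ rather than above $\kappa$, so that the only cardinals collapsed lie above $\theta$ and all cardinals in $[\kappa,\theta]$ survive. Making the strong unfoldability of $\kappa$ indestructible by ${<}\kappa$-closed $\kappa^+$-preserving forcing first (via \cite{HamkinsJohnstone2010:IndestructibleStrongUnfoldability}), and noting that the above-$\theta$ forcing is ${\leq}\theta$-closed, this preparation preserves both the strong unfoldability and the $\theta$-supercompactness of $\kappa$. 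Since $\theta$-supercompactness implies $\theta^+$-near $\theta$-supercompactness, I may then fix a function $f\colon\kappa\to\kappa$ with the near $\theta$-supercompactness Menas property from \cite{Schanker2011:Dissertation,Schanker2013:PartialNearSupercompactness}.

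Next I would run the Easton-support iteration $\P$ of length $\kappa+1$ that forces $\Add(\gamma,f(\gamma))*\dot\S_\gamma$ at each stage $\gamma<\kappa$ that is inaccessible in $V[G_\gamma]$ and forces $\dot\Q_\kappa=\Add(\kappa,\lambda)$ at stage $\kappa$, where $\lambda$ is a sufficiently large cardinal fixed below. Writing $V[G][g]$ for the extension, the Souslin trees added at the stages below $\kappa$ survive the strategically ${\leq}\gamma$-closed tail and witness that there are no weakly compact cardinals below $\kappa$, exactly as before; and since $\P_\kappa*\dot\Q_\kappa$ is $\kappa^+$-c.c.\ and the preparation collapsed nothing in $[\kappa,\theta]$, all cardinals in $[\kappa,\theta]$ are preserved.

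For the four positive properties I would reuse the corresponding earlier arguments, lifting a different embedding in each case. For weak measurability, argue as in theorem \ref{Theorem.LeastWC=LeastWM} by lifting the ultrapower $j\colon V\to M$ by a normal measure on $\kappa$ (available since $\theta$-supercompactness implies measurability); here $j(f)(\kappa)=[f]_U<\kappa^{++}$ is harmless. For near $\theta$-supercompactness and indeed $\theta^+$-near $\theta$-supercompactness, argue as in theorem \ref{Theorem.LeastWCNearlySC}: reduce a given $\mathcal A,\mathcal F$ of size $\theta^+$ to names in $V[G][g\restrict\theta^+]$, lift a small near $\theta$-supercompactness embedding with the Menas property, build the requisite master filter for the image of $\Add(\kappa,\theta^+)$ as in theorem \ref{Theorem.LeastWC=LeastWM}, derive the $\mathcal F$-normal fine filter $\mu$ on $P_\kappa\theta$, and use the smallness of the target to place $\mu$ in $V[G][g]$. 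For unfoldability, argue as in theorem \ref{Theorem.LeastWC=LeastUnfoldable}: for $A\of\kappa$ and a beth fixed point $\theta'=\beth_{\theta'}$, lift a superunfoldability embedding $j\colon M\to N$ with $N^{\theta'}\of N$ and $\theta'<j(\kappa)$, building the tail generic $\Gtail$ by diagonalization and the master-condition generic $j(g_0)$ for the top coordinate. In every one of these lifts the stage-$\kappa$ forcing occurring inside $j(\P_\kappa)$ is $\Add(\kappa,j(f)(\kappa))*\dot\S_\kappa$, and its generic is read off $g$ via lemma \ref{Lemma.Souslin+Branch=Cohen}, since a generic for Cohen forcing at $\kappa$ cannot be built by diagonalization through more than $\kappa$ dense sets.

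The step I expect to be the main obstacle is getting a single Menas function $f$ and a single top length $\lambda$ to serve all four lifts simultaneously, since the value $j(f)(\kappa)$ must play two competing roles for every embedding used, including the superunfoldability embeddings for arbitrarily large $\theta'$. On one hand, $j(f)(\kappa)>\theta$ is needed so that blowing up $2^\kappa$ past $\theta$ at stage $\kappa$ destroys, in the target, every inaccessible $\leq\theta$; combined with the preparatory removal of inaccessibles above $\theta$, this is what makes $\Ptail$ strategically ${\leq}\theta'$-closed and so diagonalizable (this is exactly the point that forced us to avoid collapsing $[\kappa,\theta]$). On the other hand, $j(f)(\kappa)$ must stay below the fixed top length $\lambda$ so that the stage-$\kappa$ Cohen generic already lives in $g$. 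The resolution is to take $f$ to be the threshold function $f(\gamma)=$ the least $\delta$ for which $\gamma$ is not nearly $\delta$-supercompact: a target $N$ closed under $\theta'$-sequences computes this threshold correctly, so across the superunfoldability embeddings $j(f)(\kappa)$ collapses to one fixed ordinal $\delta_0>\theta$ independent of $\theta'$, while the small near $\theta$-supercompactness targets give $\theta<j(f)(\kappa)<\theta^{++}$. Taking $\lambda=\delta_0^+$ (after mild further preparation guaranteeing $\delta_0$ is genuinely a set ordinal) then reconciles the two demands, and the four liftings go through together inside $V[G][g]$.
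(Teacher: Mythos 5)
You have the right architecture---a single Easton iteration forcing $\Add(\gamma,f(\gamma))*\dot\S_\gamma$ at inaccessible stages below $\kappa$ and a long Cohen product at the top, followed by separate lifting arguments for each property---and you have correctly isolated the crux: the one function $f$ must produce, via $j(f)(\kappa)$, a stage-$\kappa$ forcing in $j(\P_\kappa)$ whose generic can be read off $g$ for \emph{every} embedding used, while also being large enough that the target sees no inaccessibles in $(\kappa,\theta]$ after stage $\kappa$. But your resolution of this tension does not work as stated. Defining $f(\gamma)$ to be the least $\delta$ for which $\gamma$ is not nearly $\delta$-supercompact presupposes that the corresponding threshold $\delta_0$ for $\kappa$ itself is a set ordinal, and nothing in the hypothesis guarantees this: a supercompact $\kappa$ satisfies the hypothesis and is nearly $\delta$-supercompact for every $\delta$, so $j(f)(\kappa)$ as computed in a superunfoldability target $N$ with $N^{\theta'}\of N$ would be at least $\theta'$, growing without bound as $\theta'$ grows, and no fixed top length $\lambda$ can dominate it. The parenthetical ``mild further preparation guaranteeing $\delta_0$ is genuinely a set ordinal'' is therefore carrying the entire weight of the proof: you would have to destroy some degree of near supercompactness of $\kappa$ above $\theta$ while preserving $\theta$-supercompactness, strong unfoldability, the \GCH\ arrangement and the absence of inaccessibles above $\theta$, and then still verify that every target---including the $\theta$-sized targets of the near $\theta$-supercompactness embeddings, which do not contain $P(\theta)$ and so cannot straightforwardly evaluate near $\delta$-supercompactness for $\delta\leq\theta$---computes your threshold the way you need. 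That last uniformity is exactly what Schanker's Menas function theorem is engineered to provide; you cannot simply substitute your own $f$ for his and retain the conclusion without a new argument.

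The paper resolves the tension differently and more cleanly: after fast function forcing, it fixes a single $f\colon\kappa\to\kappa$ that is an \emph{ordinal-anticipating Laver function} simultaneously for strong unfoldability and for $\theta$-supercompactness, meaning that for each property one may choose the witnessing embedding so that $j(f)(\kappa)$ equals any prescribed ordinal (any $\alpha$ in the unfoldability case, any $\alpha<\theta^+$ in the supercompactness case). The top forcing is then simply $\Add(\kappa,\theta^{++})$, and the three lifts take $j(f)(\kappa)=\theta$, $j(f)(\kappa)=\theta^+$ and $j(f)(\kappa)=1$ respectively, so the stage-$\kappa$ generic for $j(\P_\kappa)$ is always an initial segment of $g$ of length at most $\theta^+$ together with a Souslin tree extracted from one further coordinate via lemma \ref{Lemma.Souslin+Branch=Cohen}. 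This removes any need for a single value of $j(f)(\kappa)$ to serve all embeddings at once, which is precisely the step your argument cannot complete.
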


\begin{proof}
Suppose that $\kappa$ is strongly unfoldable and $\theta$-supercompact. By replacing $\theta$ with $\theta^\ltkappa$, since $\kappa$ is necessarily also $\theta^\ltkappa$-supercompact, we may assume assume without loss of generality that $\theta^\ltkappa=\theta$. By forcing to add a fast function, if necessary (see \cite{Hamkins2001:UnfoldableCardinals} and \cite{Hamkins2000:LotteryPreparation} for details), we may assume that there is a function $f:\kappa\to\kappa$ which is an ordinal-anticipating Laver function both for strong unfoldability and for $\theta$-supercompactness; that is, we may find embeddings $j$ witnessing these large cardinal properties, for which also $j(f)(\kappa)=\alpha$ for any desired ordinal $\alpha$, in the case of strong unfoldability, and for any desired $\alpha<\theta^+$, in the case of $\theta$-supercompactness. By (class) forcing if necessary, we may assume as in theorem \ref{Theorem.LeastWC=LeastUnfoldable} that there are no inaccessible cardinals above $\theta$  (one may achieve this by first forcing via the lottery preparation \cite{Hamkins2000:LotteryPreparation} to make the $\theta$-supercompactness of $\kappa$ indestructible by $\ltkappa$-directed closed forcing, and then forcing above $\theta$ so as to kill off all inaccessible cardinals there). Similarly, we may also assume that the \GCH\ holds at $\theta$ and above.

Let $\P_\kappa$ be the Easton support forcing iteration of length $\kappa$ used in theorem \ref{Theorem.LeastWCNearlySC}, which forces at stage $\gamma$ with $\Q_\gamma=\Add(\gamma,f(\gamma))*\dot\S_\gamma$, provided that $\gamma$ is inaccessible in $V[G_\gamma]$, and uses trivial forcing otherwise. But this time, at stage $\kappa$ we force with $\Q_\kappa=\Add(\kappa,\theta^{++})$. Suppose that $G*g\of\P_\kappa*\dot\Q_\kappa$ is $V$-generic. As before, there are no weakly compact cardinals below $\kappa$ in $V[G][g]$ since the tree property fails at every inaccessible $\gamma<\kappa$ in this model.

Let us begin by showing at first merely that $\kappa$ remains nearly $\theta$-supercompact in $V[G][g]$. Let $\mathcal{A}$ be a $\theta$-sized family of subsets of $P_\kappa\theta$ and $\mathcal{F}$ be any $\theta$-sized family of functions from $P_\kappa\theta$ to $\theta$. By closing under Boolean operations and $\ltkappa$-intersections, we may assume that $\mathcal{A}$ is a $\ltkappa$-complete Boolean algebra of subsets, containing all the fineness cones and the preimages of points under the functions of $\mathcal{F}$. By the chain condition, it follows that $\mathcal{A},\mathcal{F}\in V[G][g\restrict I]$ for some set $I$ of size at most $\theta$. By applying an automorphism to the forcing to move those coordinates to the front, we may assume without loss of generality that $\mathcal{A},\mathcal{F}\in V[G][g\restrict\theta]$. Let $j:V\to M$ be a $\theta$-supercompactness embedding for $\kappa$, such that $j(f)(\kappa)=\theta$. In particular, $M^\theta\of M$ and $j\image\theta\in M$. The forcing $j(\P_\kappa)$ factors at $\kappa$ as $\P*\dot\Add(\kappa,\theta)*\dot\S_\kappa*\dot{\P}_{\tail}$. Let $B\of\kappa$ be one of the Cohen sets added by $g$ beyond coordinate $\theta$, and by lemma \ref{Lemma.Souslin+Branch=Cohen} regard $B=T*b$, where $T$ is generic for $\S_\kappa$. Thus, $M[G][g\restrict\theta][T]$ is generic for the forcing of $j(\P_\kappa)$ up to and including stage $\kappa$. Furthermore, since we assumed that there are no inaccessible cardinals in $V$ above $\kappa$, it follows that the next nontrivial stage of forcing in $\Ptail=(\dot{\P}_{\tail})_{G*g\restrict\theta*T}$ is after $\theta$, and so $\Ptail$ is strategically $\leqtheta$-closed in $M[G][g\restrict\theta][T]$. Let us simply force to add $\Gtail\of\Ptail$ over $V[G][g]$. We may lift the embedding to $j:V[G]\to M[j(G)]$ in $V[G][g][\Gtail]$, where $j(G)=G*(g\restrict\theta)*T*\Gtail$. Next, we use the fact that $g$ and $j\image\theta$ are  in $M[j(G)]$ in order to form $\bigcup j\image (g\restrict\theta)$, which is a master condition in $j(\Add(\kappa,\theta))$ in the model $M[j(G)]$. Forcing below this condition, we add a $V[G][g][\Gtail]$-generic filter $h\of j(\Add(\kappa,\theta))$, and lift the embedding to $j:V[G][g\restrict\theta]\to M[j(G)][h]$, with $h=j(g\restrict\theta)$, where the lift is a class in $V[G][g][\Gtail][h]$. Let $\mu=\set{X\in \mathcal{A}\st j\image\theta\in j(X)}$. This is easily seen to be an $\mathcal{F}$-normal fine measure on $\mathcal{A}$, which was defined in $V[G][g][\Gtail][h]$. Furthermore, since $\mathcal{A}$ has size $\theta$, it follows that $j\image\mathcal{A}$ and hence also $j\restrict\mathcal{A}\in M[j(G)][h]$. From this, it follows that $\mu\in M[j(G)][h]$. Since the forcing at stages beyond $\kappa$ was strategically $\leqtheta$-closed, it follows that $\mu\in M[G][g\restrict\theta][T]$. Since this is contained within $V[G][g\restrict \theta][T]$, which is contained in $V[G][g]$, we have witnessed the desired instance of near $\theta$-supercompactness in $V[G][g]$, as desired.

We now push the previous argument a bit harder in order to show fully that $\kappa$ is $\theta^+$-nearly $\theta$-supercompact in $V[G][g]$, which will also establish that $\kappa$ is weakly measurable there. For this case, we use a $\theta$-supercompactness embedding $j:V\to M$ for which $j(f)(\kappa)=\theta^+$. Since in this case we assume that $\mathcal{A}$ and $\mathcal{F}$ have size $\theta^+$, we may assume without loss of generality that they are in $V[G][g\restrict\theta^+]$. The \GCH\ at $\theta$ allows us to construct the filter $\Gtail$ by a diagonalization argument inside $V[G][g\restrict\theta^+][T]$, in analogy with the argument of theorem \ref{Theorem.LeastWC=LeastWM}. Although we do not have a master condition for $g\restrict\theta^+$, we do have $j\image(g\restrict\beta)\in M[j(G)]$ for each $\beta<\theta^+$, and this is enough to run the master filter argument as in theorem \ref{Theorem.LeastWC=LeastWM} to find an $M[j(G)]$-generic filter $h\of j(\Add(\kappa,\theta^+)$ with $h\in V[G][g]$ and $j"g\restrict\theta^+\subseteq h$. Thus we may lift the embedding in $V[G][g]$ to $j:V[G][g\restrict\theta^+]\to M[j(G)][h]$, with $j(g\restrict\theta^+)=h$, and this witnesses the desired instance of $\theta^+$-near $\theta$-supercompactness, since the induced measure $\mu=\set{X\of P_\kappa\theta\st X\in V[G][g\restrict\theta^+], j\image\theta\in j(X)}$ is a $\mathcal{F}$-normal fine filter on $P_\kappa\theta$ in $V[G][g]$ measuring every set in $\mathcal{A}$.

Finally, we show that $\kappa$ remains unfoldable in $V[G][g]$. Fix any $A\of\kappa$ in $V[G][g]$. By the chain condition, $A$ is determined by $\kappa$ many coordinates of $g$, and so by applying an automorphism, we may assume that $A\in V[G][g_0]$ where $g_0$ is the first coordinate of $g$. So $A=\dot A_{G*g_0}$. Fix any large beth-fixed point $\lambda=\beth_\lambda$. As in the proof of theorem \ref{Theorem.LeastWC=LeastUnfoldable}, since $\kappa$ is strongly unfoldable it is also superunfoldable, and in fact there is a $j:M\to N$ witnessing the $\lambda$-superunfoldability of $\kappa$, meaning that $M^\ltkappa\of M$, $N^\lambda\of N$, $|N|=\lambda^+$, and $\dot A\in M$. Our assumption on $f$ ensures that we may find such an embedding for which also $j(f)(\kappa)=1$. The forcing $j(\P_\kappa)$ therefore factors as $\P_\kappa*\dot\Add(\kappa,1)*\dot\S_\kappa*\dot{\P}_\tail$. Let $B$ be one of the Cohen subsets of $\kappa$ added after the first coordinate of $g$, and by lemma \ref{Lemma.Souslin+Branch=Cohen} view $B$ as $T*b$, where $T$ is $V[G][g_0]$-generic for $\S_\kappa$. Thus, we may form the partial extension $N[G][g_0][T]$. The remaining forcing $\Ptail=(\dot{\P}_\tail)_{G*g_0*T}$ is strategically $\ltlambda^+$-closed in this model, since there are no inaccessible cardinals above $\kappa$. Since $N[G][g_0][T]^\lambda\of N[G][g_0][T]$ in $V[G][g_0][T]$, we may by diagonalization find an $N[G][g_0][T]$-generic filter $\Gtail\of\Ptail$ and thereby lift the embedding to $j:M[G]\to N[j(G)]$, where $j(G)=G*g_0*T*\Gtail$. Furthermore, using $\bigcup j"g_0\in N[j(G)]$ as a master condition, we may similarly diagonalize to find an $N[j(G)]$ filter $h\of j(\Add(\kappa,1))$ below the condition $g_0$, and thereby lift the embedding to $j:M[G][g_0]\to N[j(G)][h]$, with $j(g_0)=h$. Since $A\in M[G][g_0]$ and $\lambda<j(\kappa)$, we have therefore witnessed the desired instance of unfoldability in $V[G][g]$.
\end{proof}

\section{Making every weakly compact cardinal unfoldable, weakly measurable and nearly $\theta$-supercompact}\label{Section.EveryWC}

In the previous sections, we proved that the least weakly compact cardinal can be unfoldable, weakly measurable, nearly $\theta$-supercompact or more. In this section, we prove a variety of global theorems showing how to ensure that every weakly compact cardinal---a proper class of them---exhibits such extra strength. We shall use hypotheses of various strengths in order to attain the various possible combinations.

\begin{theorem}\label{Theorem.WC=Unf=W}
 Suppose that $W$ is a class of strongly unfoldable cardinals and that $W$ has no inaccessible limit points. Then there is a class forcing extension in which
 the class of weakly compact cardinals and the class of unfoldable cardinals both coincide with $W$. In particular, it is relatively consistent with \ZFC\ that there is a proper class of weakly compact cardinals and each of them is unfoldable.
\end{theorem}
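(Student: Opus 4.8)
The plan is to globalize the construction of Theorem~\ref{Theorem.LeastWC=LeastUnfoldable} into a single class-length Easton-support iteration $\P=\langle(\P_\gamma,\dot\Q_\gamma)\mid\gamma\in\Ord\rangle$. At a stage $\gamma$ that is weakly compact in $V[G_\gamma]$, I would let $\dot\Q_\gamma$ be $\Add(\gamma,\gamma^+)$ if $\gamma\in W$, so as later to preserve the unfoldability of $\gamma$, and let $\dot\Q_\gamma$ be the weak-compactness-killing forcing $\Add(\gamma,1)*\dot\S_\gamma$ of lemma~\ref{Lemma.Souslin+Branch=Cohen} if $\gamma\notin W$; all other stages are trivial. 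Following the remark after theorem~\ref{Theorem.LeastWC=LeastUnfoldable}, I deliberately fire only at weakly compact stages rather than at all inaccessible stages, since this is what will keep the firing-stages sparse enough to control the target of the lifting embeddings. The first thing to check is that $\P$ is a tame, progressively closed class iteration, so that $V[G]\models\ZFC$, together with the key inductive bookkeeping claim that in each $V[G_\gamma]$ the weakly compact cardinals below $\gamma$ are exactly $W\cap\gamma$.

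The easy direction establishes that the two classes are contained in $W$. For $\gamma\notin W$: if $\gamma$ is weakly compact in $V[G_\gamma]$, then stage $\gamma$ explicitly adds a $\gamma$-Souslin tree, which survives the strategically $\leqgamma$-closed tail and witnesses the failure of the tree property, so $\gamma$ is not weakly compact in $V[G]$; and a $\gamma$ not weakly compact in $V[G_\gamma]$ cannot become so afterward. Hence no cardinal outside $W$ is weakly compact in $V[G]$, and since every unfoldable cardinal is weakly compact, both the class of weakly compact cardinals and the class of unfoldable cardinals are contained in $W$.

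The substantive direction is that each $\kappa\in W$ remains unfoldable (hence weakly compact) in $V[G]$. Fixing $A\of\kappa$ and a large beth fixed point $\theta=\beth_\theta$, I would absorb $A$ into the first Cohen block of the stage-$\kappa$ forcing, so that $A=\dot A_{G*g_0}$ for a name of hereditary size $\kappa$, and then reproduce the lift of theorem~\ref{Theorem.LeastWC=LeastUnfoldable}: take an embedding $j:M\to N$ witnessing the relevant unfoldability with $\dot A\in M$ and $\theta<j(\kappa)$, factor $j(\P_\kappa)$ at stage $\kappa$, use lemma~\ref{Lemma.Souslin+Branch=Cohen} to reinterpret a later Cohen block of $g$ as $\S_\kappa*\dot T$ to obtain an $N$-generic object for the stage-$\kappa$ forcing of $j(\P_\kappa)$, build an $N$-generic $\Gtail\of\Ptail$ for the tail by diagonalization, and finally lift through the stage-$\kappa$ Cohen forcing using $\bigcup j\image g_0$ as a master condition, arriving at $j:M[G][g_0]\to N[j(G)][j(g_0)]$ with $A$ in the domain and $\theta<j(\kappa)$.

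The main obstacle, exactly as in the local theorem, is the closure of the tail forcing $\Ptail$ in $N$, and this is where the hypothesis that $W$ has no inaccessible limit points is essential. Since $\kappa\in W$ is inaccessible and $W$ has no inaccessible limit points, $W\cap\kappa$ is bounded below $\kappa$; combined with the inductive fact that the weakly compact cardinals below $\kappa$ in $V[G]$ are exactly $W\cap\kappa$, this means $\kappa$ is not a limit of weakly compact cardinals in the partial extension. I would exploit this to arrange the embedding so that $N$ believes there are no weakly compact cardinals in the interval $(\kappa,\theta]$---precisely the property isolated in the remark following theorem~\ref{Theorem.LeastWC=LeastUnfoldable}. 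The clean mechanism is that the firing-stages above $\kappa$ in $j(\P_\kappa)$ come from the image of the firing-set, and since $W\cap\kappa$ is a bounded subset of $\kappa$ it is fixed by $j$ and contributes no stage in $(\kappa,j(\kappa))$; thus $j(\P_\kappa)$ fires at no stage of $(\kappa,\theta]$ and $\Ptail$ is strategically $\leqtheta$-closed in the stage-$\kappa$ extension, so the diagonalization producing $\Gtail$ goes through (using $|N|=\theta^+$) and the master-condition lift proceeds as before. The delicate point I expect to be the technical heart is verifying that the weak-compactness-killing carried out below $\kappa$ does not reintroduce a weakly compact firing-stage of $(\kappa,\theta]$ in the target $N$, i.e., genuinely producing an embedding that both correctly computes $\P_\kappa$ from the bounded set $W\cap\kappa$ and regards $\kappa$ as the largest weakly compact cardinal despite the presence of members of $W$ above $\kappa$ in $V$. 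Once this is secured, $\kappa$ is unfoldable in $V[G]$, and so the weakly compact cardinals and the unfoldable cardinals both coincide with $W$.
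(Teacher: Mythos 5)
There is a genuine gap, and it sits exactly where you locate ``the technical heart'': your claimed mechanism for the closure of $\Ptail$ is false. Your iteration fires at \emph{every} weakly compact stage of the partial extension---with $\Add(\gamma,\gamma^+)$ at stages in $W$ and with $\Add(\gamma,1)*\dot\S_\gamma$ at weakly compact stages outside $W$. Consequently $j(\P_\kappa)$, computed in $N$, also fires at every stage $\gamma\in(\kappa,j(\kappa))$ that is weakly compact in $N[G_\gamma]$, regardless of whether $\gamma\in j(W\cap\kappa)$. To run the diagonalization against the $\theta^+$ many dense subsets of $j(\P_\kappa)$ lying in $N$ you need $V_\theta\of N$ and $N^\theta\of N$ (the remark after theorem \ref{Theorem.LeastWC=LeastUnfoldable} explains that mere unfoldability will not do), but then $N$ computes $V_{\gamma+1}$ correctly for $\gamma<\theta$ and so correctly sees every member of $W\cap(\kappa,\theta]$ as weakly compact; since $W$ is a proper class this intersection is nonempty for large $\theta$, and those cardinals remain weakly compact in $N[G_{\kappa+1}]$ because the forcing up to and including stage $\kappa$ is small relative to them. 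So $j(\P_\kappa)$ has nontrivial stages in $(\kappa,\theta]$, $\Ptail$ is not strategically $\leqtheta$-closed there, and the diagonalization producing $\Gtail$ breaks down. Nor can you substitute the actual $V$-generic for that stretch of $j(\P_\kappa)$: since $j(W\cap\kappa)=W\cap\kappa$ contains nothing above $\kappa$, the iteration $j(\P_\kappa)$ hits a stage $\gamma\in W\cap(\kappa,\theta]$ with the Souslin-tree forcing, whereas $\P$ in $V$ uses $\Add(\gamma,\gamma^+)$ there; the two posets disagree on $(\kappa,\theta]$, so the real generic is generic for the wrong forcing. (Your single-iteration design is essentially what the paper uses for the global weak-measurability theorem \ref{Theorem.WC=WM=W}, where it succeeds because a measurable ultrapower leaves only $\kappa^+$ many dense sets to meet and the tail is strategically closed well past $\kappa^+$; unfoldability demands arbitrary $\theta$, hence $\theta^+$ many dense sets, which is precisely what defeats the approach.)

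The paper resolves this with a two-step construction. The first step is a \emph{uniform} Easton iteration $\P$ forcing $\Add(\gamma,1)*\dot\S_\gamma$ at every inaccessible stage, killing all weakly compact cardinals including those in $W$. Uniformity is the point: $j(\P_\kappa)$ then agrees with $\P$ up to $\theta$, so the genuine generic $G_\theta$ handles the stretch $(\kappa,\theta]$, and only the part beyond $\theta$ must be built by diagonalization---which works after increasing $\theta$ to a singular beth-fixed point that is not a limit of inaccessibles, so that $\P_\theta$ is $\theta^+$-c.c.\ and the tail beyond $\theta$ is strategically $\leqtheta$-closed in $N[G_\theta]$. The second step is the Easton product $\R=\Pi_{\kappa\in W}\,(T_\kappa\times\Add(\kappa,\kappa^+))$, which forces with the Souslin tree $T_\kappa$ at each $\kappa\in W$ to resurrect it: by lemma \ref{Lemma.Souslin+Branch=Cohen} the combination $T_\kappa*b_\kappa$ together with the first Cohen factor is just Cohen forcing over $V[G_\kappa]$, and the product structure allows the factors to be rearranged so that the lifting argument of theorem \ref{Theorem.LeastWC=LeastUnfoldable} applies. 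The hypothesis that $W$ has no inaccessible limit points is used not for the purpose you assign it, but to factor $\R$ at a non-$W$ inaccessible $\gamma$ into a small piece below and a strategically $\leqgamma$-closed piece above, so that such $\gamma$ stay non-weakly-compact, and to support the choice of $\theta$.
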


\begin{proof}
Suppose that $W$ is a class of strongly unfoldable cardinals and that $W$ has no inaccessible limit points. By forcing, if necessary, we may suppose without loss of generality that the \GCH\ holds, since the forcing to achieve this preserves all strongly unfoldable cardinals. We shall perform class forcing, in two steps. The first step will kill all weakly compact cardinals: let $\P$ be the Easton support class iteration, which at any stage $\gamma$ that happens to be inaccessible in the partial extension $V[G_\gamma]$, forces with the two-step iteration $\Q_\gamma=\Add(\gamma,1)*\dot\S_\gamma$, which adds a Cohen subset to $\gamma$ and then uses the forcing of lemma \ref{Lemma.Souslin+Branch=Cohen} to add a homogeneous $\gamma$-Souslin tree. Let $G\of\P$ be $V$-generic for this iteration and consider the model $V[G]$. Since the stage $\gamma$ forcing is $\ltgamma$-strategically closed, it follows that the iteration $\P$ is progressively strategically closed, and consequently by general considerations (see \cite{Reitz2006:Dissertation}) we may deduce that $V[G]\satisfies\ZFC$. Furthermore, since the stage $\gamma$ forcing explicitly adds a $\gamma$-Souslin tree and the subsequent stages of forcing do not affect the weak compactness of $\gamma$, it follows that there are no weakly compact cardinals in $V[G]$.

The second step will restore the weak compactness and unfoldability of the cardinals in $W$. Namely, in $V[G]$ let $\R=\Pi_{\kappa\in W}\,(T_\kappa\times\Add(\kappa,\kappa^+))$, using Easton-support product forcing, where $T_\kappa$ is the $\kappa$-Souslin tree added by the stage $\kappa$ forcing of $\P$ (and there was indeed forcing at stage $\kappa$ when $\kappa\in W$). Let $H\of\R$ be $V[G]$-generic, and consider $V[G][H]$, our final model. Consider first an inaccessible cardinal $\gamma$ not in $W$. It is not weakly compact in $V[G]$, and since $\gamma$ is not a limit point of $W$, the subsequent forcing $\R$ factors into the part of the forcing above $\gamma$, which is strategically $\leqgamma$-closed in $V[G]$, and the part of the forcing below $\gamma$, which is small with respect to $\gamma$. Neither factor can make $\gamma$ weakly compact, and so no such $\gamma$ is weakly compact in $V[G][H]$.

It remains to show that if $\kappa\in W$, then $\kappa$ is unfoldable (and hence also weakly compact) in $V[G][H]$. For this, suppose $\kappa\in W$, and consider any $A\of\kappa$ in $V[G][H]$ and any ordinal $\theta$. By increasing $\theta$ if necessary, we may assume that it is beth-fixed point $\theta=\beth_\theta$ and above $\kappa$. We may factor the $\P$ forcing at $\kappa$ as $\P_\kappa*\dot{\Add}(\kappa,1)*\dot{\S}_\kappa*\dot{\P}^\kappa$, with the corresponding factorization of the generic filter as $G_\kappa*g_\kappa*T_\kappa*G^\kappa$. We may similarly factor the $\R$ forcing as $\R_\kappa\times T_\kappa\times\Add(\kappa,\kappa^+)\times\R^\kappa$, where $\R_\kappa$ is the part of $\R$ below $\kappa$ and $\R^\kappa$ is the part above $\kappa$. Let us denote the corresponding factorization of the generic filter $H$ as $H_\kappa\times b_\kappa\times h_\kappa\times H^\kappa$, where $b_\kappa$ is the branch added by forcing with the tree $T_\kappa$ and $h_\kappa\of\Add(\kappa,\kappa^+)$ are the subsequent Cohen sets added by the rest of the stage $\kappa$ factor of $\R$, and where $H_\kappa\of\R_\kappa$ and $H^\kappa\of\R^\kappa$ are the parts of $H$ at the factors below and above $\kappa$, respectively. Since $A\of\kappa$ is in $V[G][H]$, it follows that $A\in V[G_\kappa][g_\kappa][T_\kappa][H_\kappa][b_\kappa][h_\kappa]$, as the forcing above $\kappa$ in both $\P$ and $\R$ is strategically $\leqkappa$-closed. Furthermore, since $\Add(\kappa,\kappa^+)$ is $\kappa^+$-c.c., it follows that we do not need all of the $h_\kappa$ forcing to construct $A$, and so $A\in V[G_\kappa][g_\kappa][T_\kappa][H_\kappa][b_\kappa][h_\kappa\restrict\beta]$ for some $\beta<\kappa^+$, where $h_\kappa\restrict\beta$ is the restriction of $h_\kappa$ to $\Add(\kappa,\beta)$. Indeed, by applying an automorphism of the forcing, we may assume that $A\in V[G_\kappa][g_\kappa][T_\kappa][H_\kappa][b_\kappa][h_\kappa^0]$, where $h_\kappa^0$ is the very first Cohen subset of $\kappa$ added by $h_\kappa$. Since the $\R$ forcing is a product, we may rearrange the factors as $V[G_\kappa][g_\kappa][T_\kappa][b_\kappa][h_\kappa^0][H_\kappa]$. Lemma \ref{Lemma.Souslin+Branch=Cohen} shows that the forcing $T_\kappa*b_\kappa$, which adds the tree and then forces with it, is the same as adding a single Cohen subset of $\kappa$. In particular, there is a name $\dot A\in V$ of hereditary size $\kappa$ in $V$ which has value $A$ with respect to the generic filter we have just described for this extension.

Since $\kappa$ is strongly unfoldable in $V$, there is a $\theta$-superunfoldability embedding $j:M\to N$, where $M\satisfies\ZFC$ is a transitive set of size $\kappa$ with $M^\ltkappa\of M$ and $\dot A\in M$, and where $N$ is a transitive set with $N^\theta\of N$ and $|N|^V=\beth_{\theta+1}=\theta^+$, where $\theta<j(\kappa)$. Without loss of generality, by increasing $\theta$ if necessary, we may assume that $\theta$ is a singular beth-fixed point which is not a limit of inaccessible cardinals. Let us lift $j$ through $G_\kappa$. Since $N^\theta\subseteq N$ it follows that the forcing $j(\P_\kappa)$ factors as $\P_{\theta}*\dot{\P}_{\tail}$ where $\dot{\P}_{\tail}$ is a $\P_{\theta}$-name for the tail of the iteration $j(\P_\kappa)$ from stage $\theta$ to stage $j(\kappa)$. Since $\theta$ is not a limit of inaccessible cardinals, it follows that on a tail the iteration $\P_\theta$ is trivial. This implies that $\P_\theta$ is $\theta^+$-c.c. and thus $N[G_\theta]^\theta\subseteq N[G_\theta]$ in $V[G_\theta]$. Since $\theta$ is singular in $N[G_\theta]$ it follows that $\P_{\tail}=(\dot{\P}_{\tail})_{G_{\theta}}$ is strategically ${\leq}\theta$-closed in $N[G_\theta]$. Thus, as in the proof of theorem 4, we may build an $N[G_{\theta}]$-generic filter $G_{\tail}$ for $\P_{\tail}$ in $V[G_{\theta}]$. This implies that $j$ lifts in $V[G_{\theta}]$ to $j:M[G_\kappa]\to N[j(G_\kappa)]$ where $j(G_\kappa)=G_{\theta}*G_{\tail}$. Using the fact that $T_\kappa*b_\kappa*h_\kappa^0$ can be viewed as a single Cohen subset of $\kappa$, the master condition argument given in theorem \ref{Theorem.LeastWC=LeastUnfoldable} shows that this embedding lifts further to $j:M[G_\kappa][T_\kappa][b_\kappa][h_\kappa^0]\to N[j(G_\kappa)][j(T_\kappa)][j(b_\kappa)][j(h_\kappa^0)]$ in $V[G][H]$. Furthermore, since $H_\kappa\of\R_\kappa$ is small relative to $\kappa$, we may also easily lift the embedding to $j:M[G_\kappa][T_\kappa][b_\kappa][h_\kappa^0][H_\kappa]\to N[j(G_\kappa)][j(T_\kappa)][j(b_\kappa)][j(h_\kappa^0)][H_\kappa]$. Since $A$ is now an element of the domain of this embedding, we have thereby witnessed this instance of unfoldability, and so $\kappa$ is unfoldable in $V[G][H]$, as desired.

Finally, the last sentence in the statement of the theorem follows from what we have already proved, by starting with any model having a proper class of unfoldable cardinals, going to $L$ where these cardinals are strongly unfoldable, chopping off the model if this class should have an inaccessible limit point, and then producing a class forcing extension where these cardinals become the weakly compact cardinals and remain unfoldable.
\end{proof}

Next, we arrange that every weakly compact cardinal is weakly measurable.

%
\begin{theorem}\label{Theorem.WC=WM=W}
 Suppose that $W$ is any class of measurable cardinals. Then there is a class forcing extension $V[G]$ in which the class of weakly compact cardinals and the class of weakly measurable cardinals both coincide with $W$.
\end{theorem}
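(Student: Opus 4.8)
The plan is to follow the two-step template of theorem \ref{Theorem.WC=Unf=W}, but to substitute the measurability-lifting argument of theorem \ref{Theorem.LeastWC=LeastWM} for the unfoldability argument used there. By preliminary forcing that preserves every measurable cardinal, I would first arrange the \GCH. The killing step is the Easton-support class iteration $\P$ that at each stage $\gamma$ inaccessible in $V[G_\gamma]$ forces with $\Add(\gamma,1)*\dot\S_\gamma$, adding a Cohen subset and then a homogeneous $\gamma$-Souslin tree via lemma \ref{Lemma.Souslin+Branch=Cohen}. As in theorem \ref{Theorem.WC=Unf=W}, this iteration is progressively strategically closed, so $V[G]\satisfies\ZFC$ (see \cite{Reitz2006:Dissertation}) and $V[G]$ has no weakly compact cardinals. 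The restoring step is the Easton-support product $\R=\prod_{\kappa\in W}(T_\kappa\times\Add(\kappa,\kappa^{++}))$, where $T_\kappa$ is the tree added at stage $\kappa$ of $\P$; forcing with $T_\kappa$ chops down the last $\kappa$-Souslin tree, while the $\Add(\kappa,\kappa^{++})$ factor supplies the Cohen subsets needed for the ultrapower lift. Just as in theorem \ref{Theorem.WC=Unf=W}, no inaccessible $\gamma\notin W$ becomes weakly compact in $V[G][H]$, since its surviving tree $T_\gamma$ is never chopped and the factors of $\R$ around $\gamma$ are either small or strategically $\leqgamma$-closed. Because every weakly measurable cardinal is weakly compact, it will then be enough to prove that each $\kappa\in W$ is weakly measurable in $V[G][H]$: this simultaneously shows that the weakly compact cardinals and the weakly measurable cardinals both coincide with $W$.

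So fix $\kappa\in W$, a normal measure $U$ on $\kappa$, and the ultrapower $j\colon V\to M$, so that $\cp(j)=\kappa$ and $M^\kappa\of M$. Given in $V[G][H]$ a family $\mathcal A$ of $\kappa^+$ subsets of $\kappa$, I must produce a $\kappa$-complete nonprincipal filter measuring every member. Since all forcing strictly beyond stage $\kappa$ (the tail of $\P$ above $\kappa$ and the part of $\R$ above $\kappa$) is strategically $\leqkappa$-closed, it adds no new subsets of $\kappa$; hence every member of $\mathcal A$ already lies in the extension by the set forcing at and below stage $\kappa$. Using the $\kappa^+$-chain condition of the Cohen factor and an automorphism of the forcing to move the relevant coordinates to the front, I may assume that $\mathcal A$ lies in a single set-forcing extension $V[G_\kappa][H_\kappa][c]$, where $G_\kappa$ is the part of $\P$ below $\kappa$, $H_\kappa$ the part of $\R$ below $\kappa$, and (by lemma \ref{Lemma.Souslin+Branch=Cohen}) $c$ is the combination of the stage-$\kappa$ Cohen set of $\P$, the $\kappa$-Souslin tree $T_\kappa$ together with a generic branch through it, and the first $\kappa^+$ Cohen subsets of the $\kappa$th factor of $\R$, amounting to an $\Add(\kappa,\kappa^+)$-generic over $V[G_\kappa][H_\kappa]$. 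I would then lift $j$ through precisely this set forcing, exactly as in theorem \ref{Theorem.LeastWC=LeastWM}: through $\P_\kappa$ using $G_\kappa$ together with a diagonalization-built $M$-generic $\Gtail$ for the set-sized tail of $j(\P_\kappa)$ from $\kappa$ to $j(\kappa)$; through $\R_\kappa$ using $H_\kappa$; and through the stage-$\kappa$ Cohen forcing by the master-filter construction of theorem \ref{Theorem.LeastWC=LeastWM}, relying on $j(\kappa^+)=\sup j\image\kappa^+$ for the normal ultrapower. The induced filter $F=\set{X\of\kappa\st X\in V[G_\kappa][H_\kappa][c]\text{ and }\kappa\in j(X)}$ is then $\kappa$-complete, measures every member of $\mathcal A$, and---being assembled entirely inside $V[G][H]$---belongs to $V[G][H]$, witnessing this instance of weak measurability.

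The place where this departs from theorem \ref{Theorem.WC=Unf=W}, and what I expect to be the main obstacle, is that $W$ is assumed to be merely a class of measurable cardinals, with no hypothesis forbidding inaccessible limit points. Consequently $\kappa$ may be a limit of $W$, in which case $\R_\kappa$ has full size $\kappa$ and can itself add subsets of $\kappa$; unlike in theorem \ref{Theorem.WC=Unf=W} it cannot be dismissed as a small factor, and one must genuinely lift $j$ through it. What saves the argument is that $\R_\kappa$ is still a \emph{set} forcing, so $j(\R_\kappa)$ is a set forcing in $M[j(G_\kappa)]$ that agrees with $\R_\kappa$ below $\kappa$ and has its remaining coordinates at cardinals ${\geq}\kappa$. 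The \GCH\ in $V[G]$ guarantees that $j$ of a set forcing of size at most $\kappa^+$ has only $\kappa^+$ many dense subsets in $M$ as computed in $V[G][H]$, and the coordinates of $j(\R_\kappa)$ strictly above $\kappa$ are strategically $\leqkappa$-closed, so their $M$-generic can be built by diagonalization using the closure of $M$; the coordinate at $\kappa$ itself---present exactly when $W\intersect\kappa\in U$---coincides with the genuine $\kappa$th factor $T_\kappa\times\Add(\kappa,\kappa^{++})$ of $\R$ and is supplied by the real generics there together with the master filter. The remaining work is the careful bookkeeping of which coordinates of the several image forcings are met by genuine generics, which by master conditions or filters, and which by diagonalization, all while keeping the constructed objects inside $V[G][H]$; once this is in hand, the verification that $F$ witnesses weak measurability proceeds exactly as in theorem \ref{Theorem.LeastWC=LeastWM}.
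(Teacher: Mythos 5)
Your overall architecture is not the one the paper uses, and in the generality claimed by the theorem it has a genuine gap at exactly the point you flag as the main obstacle. The problem is the lifting of $j$ through $\R_\kappa$ when $\kappa$ is a limit point of $W$. In that case $j(W\intersect\kappa)=j(W)\intersect j(\kappa)$ is unbounded in $j(\kappa)$, so $j(\R_\kappa)$ contains, for each $\delta\in j(W)$ with $\kappa<\delta<j(\kappa)$, a factor $T_\delta\times\Add(\delta,\delta^{++})$ in which $T_\delta$ is the $\delta$-Souslin tree already determined by the filter $\Gtail$ you built first. Your claim that the coordinates of $j(\R_\kappa)$ strictly above $\kappa$ are ``strategically $\leqkappa$-closed, so their $M$-generic can be built by diagonalization'' fails for these tree factors: forcing with a $\delta$-Souslin tree is $\ltdelta$-distributive but is not strategically closed in the way your diagonalization requires, and one cannot build a generic (cofinal) branch through an already-fixed Souslin tree of $M[j(G_\kappa)]$ by transfinitely extending nodes, because at limit stages of the construction the branch built so far need not be realized by any node of the tree---Souslin trees are precisely the trees in which most short branches die. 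The entire content of lemma \ref{Lemma.Souslin+Branch=Cohen} is that the tree and the branch must be chosen \emph{together}; once $\Gtail$ has fixed the trees $T_\delta$, that lemma is no longer available. (A related, more minor issue: your argument that an inaccessible $\gamma\notin W$ stays non-weakly-compact quotes the factorization of $\R$ from theorem \ref{Theorem.WC=Unf=W}, which again uses that $\gamma$ is not a limit point of $W$ to make $\R_\gamma$ small relative to $\gamma$.)

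This is precisely why the paper proves theorem \ref{Theorem.WC=WM=W} with a single iteration rather than your kill-and-resurrect scheme: the stage $\gamma$ forcing is $\Add(\gamma,\gamma^+)*\dot\S_\gamma$ only for inaccessible $\gamma\notin W$, while at $\gamma\in W$ it is just $\Add(\gamma,\gamma^{++})$, so no Souslin tree is ever added at a $W$-stage and nothing needs resurrecting there. Then the tail of $j(\P_\kappa)$ beyond stage $\kappa$ is an iteration in which each $\dot\S_\delta$ occurs only as the second half of a strategically $\ltdelta$-closed two-step iteration, never as a tree to be branched, so $\Gtail$ really can be built by diagonalization; the case split $\kappa\in j(W)$ versus $\kappa\notin j(W)$ is absorbed into the choice of generic for the single stage $\kappa$ of $j(\P_\kappa)$, using Kunen's lemma only at that one stage to extract an $\S_\kappa$-generic from a spare Cohen coordinate of $g_\kappa$. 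The paper explicitly remarks after the proof that a two-step argument in the style of theorem \ref{Theorem.WC=Unf=W} works only ``when $W$ has no inaccessible limit points''; under that extra hypothesis your proof could be repaired, but the theorem as stated allows an arbitrary class of measurable cardinals.
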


\begin{proof} We follow the proof of theorem \ref{Theorem.LeastWC=LeastWM}, continuing the iteration further so as to handle all the cardinals in $W$. Suppose that $W$ is any class of measurable cardinals in $V$. By forcing if necessary, we may assume that the \GCH\ holds in $V$, since the canonical forcing to achieve this preserves all measurable cardinals. Let $\P$ be the Easton-support class iteration, which forces only at inaccessible cardinal stages $\gamma$ in such a way that if $\gamma\notin W$, then the stage $\gamma$ forcing is $\Q_\gamma=\Add(\gamma,\gamma^+)*\dot\S_\gamma$ as defined in $V[G_\gamma]$, and otherwise, if $\gamma\in W$, then the stage $\gamma$ forcing is $\Q_\gamma=\Add(\gamma,\gamma^{++})$ as defined in $V[G_\gamma]$. Suppose that $G\of\P$ is $V$-generic for this forcing, and consider the extension $V[G]$, the desired final model. It is clear that if $\gamma\notin W$, then $\gamma$ cannot be weakly compact in $V[G]$, since in this case the stage $\gamma$ forcing added a $\gamma$-Souslin tree, which ruins the tree property for $\gamma$, and this tree is not disturbed by the subsequent forcing after stage $\gamma$. So we must only prove that every $\kappa\in W$ remains weakly measurable in $V[G]$ (and hence also weakly compact there). Consider any such $\kappa\in W$. Since the forcing beyond stage $\kappa$ is strategically closed well beyond $\kappa$, it suffices to prove that $\kappa$ is weakly measurable in $V[G_\kappa][g_\kappa]$, where $g_\kappa\of\Add(\kappa,\kappa^{++})$ is the generic filter added at stage $\kappa$. For this, suppose that $A\of P(\kappa)$ is a family of subsets of $\kappa$, with $A$ having size at most $\kappa^+$ in $V[G][g_\kappa]$. By the chain condition, it follows that $A\in V[G][g_\kappa\restrict\delta]$ for some $\delta<\kappa^{++}$, and by applying an automorphism to the forcing, we may assume without loss that $\delta=\kappa^+$. Fix in $V$ an elementary embedding $j:V\to M$, the ultrapower by a normal measure on $\kappa$. Consider the forcing $j(\P_\kappa)$, which is the corresponding forcing iteration in $M$ up to stage $j(\kappa)$. In $M$ we may factor this iteration as $j(\P_\kappa)=\P_\kappa*\Qdot_\kappa^M*\Ptail$, where $\Qdot_\kappa^M$ is either $\Add(\kappa,\kappa^+)*\dot\S_\kappa$ or $\Add(\kappa,\kappa^{++})$, as defined in $M[G_\kappa]$, depending on whether or not $\kappa\in j(W)$. In the case $\kappa\notin j(W)$, we can use $h_\kappa=g_\kappa*T_\kappa$, where $T_\kappa$ is extracted from a coordinate in $g_\kappa$ beyond $\kappa^+$ as in the proof of theorem \ref{Theorem.LeastWC=LeastWM}; in the case $\kappa\in j(W)$, then we may simply use $h_\kappa=g_\kappa\restrict(\kappa^{++})^M\of\Q_\kappa^M$. (Note that $(\kappa^{++})^{M[G_\kappa]}=(\kappa^{++})^M<j(\kappa)<\kappa^{++}$, in light of the \GCH\ in $V$.) Since $M^\kappa\of M$ in $V$, it follows that $M[G_\kappa][h_\kappa]^\kappa\of M[G_\kappa][h_\kappa]$ in $V[G_\kappa][h_\kappa]$, and so we may by the usual diagonalization construct an $M[G_\kappa][h_\kappa]$-generic filter $\Gtail\of\Ptail$, and thus lift the embedding $j$ to $j:V[G_\kappa]\to M[j(G_\kappa)]$, where $j(G_\kappa)=G_\kappa*h_\kappa*\Ptail$. We may now use the master-filter argument as in the proof of theorem \ref{Theorem.LeastWC=LeastWM} to lift the embedding to $j:V[G_\kappa][g_\kappa\restrict\kappa^+]\to M[j(G_\kappa)][j(g_\kappa\restrict\kappa^+)]$ in $V[G]$. Since $A\in V[G_\kappa][g_\kappa\restrict\kappa^+]$, this lifted embedding witnesses the desired instance of weak measurability of $\kappa$, and so $\kappa$ is weakly measurable in $V[G]$, as desired.
\end{proof}

One may also undertake a two-step proof of theorem \ref{Theorem.WC=WM=W} in the style of theorem \ref{Theorem.WC=Unf=W}, when $W$ has no inaccessible limit points. Combining this with the proof of theorem \ref{Theorem.WC=Unf=W} establishes:

\begin{theorem}
 If $W$ is any class of strongly unfoldable measurable cardinals and $W$ has no inaccessible limit points, then there is a forcing extension in which the class of weakly compact cardinals, the class of unfoldable cardinals and the class of weakly measurable cardinals each coincides with $W$.
\end{theorem}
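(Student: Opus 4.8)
The plan is to fuse the two-step format of Theorem~\ref{Theorem.WC=Unf=W} with the measurable-cardinal lifting of Theorem~\ref{Theorem.WC=WM=W}, running both arguments over a single pair of class forcings. First I would force the \GCH, which preserves all strongly unfoldable and all measurable cardinals and in particular every element of $W$. Then, exactly as in Theorem~\ref{Theorem.WC=Unf=W}, I would let $\P$ be the Easton-support class iteration that forces at each stage $\gamma$ inaccessible in $V[G_\gamma]$ with the \emph{uniform} two-step $\Q_\gamma=\Add(\gamma,1)*\dot\S_\gamma$, adding a Cohen subset and then a homogeneous $\gamma$-Souslin tree $T_\gamma$ by Lemma~\ref{Lemma.Souslin+Branch=Cohen}; since each such tree ruins the tree property and is undisturbed by the strategically $\leqgamma$-closed later forcing, $V[G]$ has no weakly compact cardinals. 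For the second step I would take the Easton-support product $\R=\Pi_{\kappa\in W}\,(T_\kappa\times\Add(\kappa,\kappa^{++}))$, which forces with each tree $T_\kappa$ (resurrecting $\kappa$ through Kunen) and adds $\kappa^{++}$ further Cohen subsets. Using $\Add(\kappa,\kappa^{++})$ rather than the $\Add(\kappa,\kappa^{+})$ of Theorem~\ref{Theorem.WC=Unf=W} is the crucial design choice: the larger block of Cohen subsets is precisely what the weak-measurability lift needs, while (as remarked after Theorem~\ref{Theorem.LeastWC=LeastUnf=LeastWM}) it does not affect the unfoldability lift. Let $V[G][H]$ be the final model.

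The verification then splits into three parts. That no inaccessible $\gamma\notin W$ is weakly compact in $V[G][H]$ goes exactly as in Theorem~\ref{Theorem.WC=Unf=W}: such $\gamma$ is already non--weakly-compact in $V[G]$, and since $W$ has no inaccessible limit points the product $\R$ factors at $\gamma$ into a piece small relative to $\gamma$ and a strategically $\leqgamma$-closed tail, neither of which can restore weak compactness. For $\kappa\in W$, unfoldability in $V[G][H]$ is obtained by the $\theta$-superunfoldability lifting of Theorem~\ref{Theorem.WC=Unf=W}: by Lemma~\ref{Lemma.Souslin+Branch=Cohen} the tree-plus-branch $T_\kappa*b_\kappa$ together with one further $\Add(\kappa,1)$-coordinate of the stage-$\kappa$ product again collapses to a single Cohen subset of $\kappa$, so the master-condition lift of an embedding $j:M\to N$ runs verbatim; here I would take $\theta$ to be a singular beth-fixed point that is not a limit of inaccessible cardinals, so that the tail $\dot\P_{\tail}$ is strategically $\leqtheta$-closed and a generic for it can be diagonalized as in Theorem~\ref{Theorem.WC=Unf=W}.

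The substantive new work is to show that every $\kappa\in W$ remains weakly measurable in $V[G][H]$, and this is the step I expect to be the main obstacle. The strategy is to reduce to the local situation and then reproduce the ultrapower lift of Theorem~\ref{Theorem.LeastWC=LeastWM}. Because the forcing above stage $\kappa$ in both $\P$ and $\R$ is strategically closed well past $\kappa$ (again using that $W$ has no inaccessible limit points, so $\R_\kappa$ is bounded below $\kappa$ and later forcing adds no new subsets of $\kappa$ or families of size $\kappa^{+}$), it suffices to witness weak measurability in $V[G_\kappa][g_\kappa][T_\kappa][H_\kappa][b_\kappa][h_\kappa]$. Rearranging the product and applying Lemma~\ref{Lemma.Souslin+Branch=Cohen} to recombine $g_\kappa$, $T_\kappa*b_\kappa$ and $h_\kappa$, this model is just $V[G_\kappa][H_\kappa]$ extended by a single $\Add(\kappa,\kappa^{++})$-generic, so it is exactly the configuration of Theorem~\ref{Theorem.LeastWC=LeastWM} (with the small factor $H_\kappa$ absorbed), and a given family $A$ of at most $\kappa^{+}$ subsets of $\kappa$ may be pushed, via the $\kappa^{+}$-c.c.\ and an automorphism of the stage-$\kappa$ forcing, into $V[G_\kappa][H_\kappa]$ extended by only $\kappa^{+}$ of those Cohen subsets.

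I would then take $j:V\to M$ the ultrapower by a normal measure on $\kappa$ and lift it through the forcing up to and including stage $\kappa$. The delicate point, and the heart of the obstacle, is that the tree $T_\kappa$ is created in the first-step iteration $\P$ but branched only in the second-step product $\R$, and that $j(\R)$ carries a factor at index $\kappa$ exactly when $\kappa\in j(W)$. I would handle this by the case split of Theorem~\ref{Theorem.WC=WM=W}: the uniform stage-$\kappa$ forcing $\Add(\kappa,1)*\dot\S_\kappa$ of $j(\P)$ is fed $g_\kappa*T_\kappa$ and the iteration tail is diagonalized to $\Gtail$ (using $M^\kappa\of M$ and that $j(\P_\kappa)$ has only $\kappa^{+}$ dense sets in $M$); then, when $\kappa\in j(W)$, the $\kappa$-factor of $j(\R)$ is fed $b_\kappa$ and $h_\kappa\restrict(\kappa^{++})^M$, noting $(\kappa^{++})^M<j(\kappa)<\kappa^{++}$ by the \GCH\ and that $j(\kappa)$ is a limit cardinal in $M$, while when $\kappa\notin j(W)$ the product simply omits $\kappa$ and $b_\kappa,h_\kappa$ are absorbed into the subsequent master construction. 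Finally, to capture $A$, I would lift through the $\kappa^{+}$ relevant Cohen coordinates by the master-filter argument of Theorem~\ref{Theorem.LeastWC=LeastWM}, building in $V[G][H]$ a continuous descending $\kappa^{+}$-sequence in $j(\Add(\kappa,\kappa^{+}))=\Add(j(\kappa),j(\kappa^{+}))^{M[j(G)]}$ that is compatible with $j\image(h_\kappa\restrict\kappa^{+})$ and meets each of the $\kappa^{+}$ maximal antichains of $M[j(G)]$. The induced $\kappa$-complete normal filter measures $A$ and, by the closure of the tails, lies in $V[G][H]$; together with the other two parts this shows that the classes of weakly compact, unfoldable, and weakly measurable cardinals in $V[G][H]$ all coincide with $W$.
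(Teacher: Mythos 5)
Your overall architecture --- a two-step $\P*\R$ with the three-part verification --- is the one the paper intends, and the killing of weak compactness and the unfoldability lift are fine. But there is a genuine gap in the weak-measurability step, and it traces to your choice of the \emph{uniform} stage forcing $\Add(\gamma,1)*\dot\S_\gamma$ in $\P$. The master-filter argument of theorem \ref{Theorem.LeastWC=LeastWM} needs the domain-side generic to be present, initial segment by initial segment, in the target model: to extend $q_\alpha$ into a maximal antichain while staying compatible with $j\image\bar g$ one uses that $\bigcup j\image(\bar g\restrict\beta)$ is a \emph{condition} of $\Add(j(\kappa),j(\beta))^{M[j(G)]}$, hence an element of $M[j(G)]$. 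In theorem \ref{Theorem.LeastWC=LeastWM} this holds because the stage-$\kappa$ slot of $j(\P_\kappa)$ is $\Add(\kappa,\kappa^+)*\dot\S_\kappa$ and is fed exactly the $\kappa^+$ many Cohen coordinates that later form the domain. In your setup that slot is only $\Add(\kappa,1)*\dot\S_\kappa$; the tail $\Ptail$ is strategically ${\leq}\kappa$-closed and so contributes no new subsets of $\kappa$; and since $W$ has no inaccessible limit points, $W\intersect\kappa$ is bounded and hence null for the normal measure, so $\kappa\notin j(W)$ and $j(\R)$ has no factor at $\kappa$ either (your case split is moot). Thus $M[j(G)][H_\kappa]\of V[G][H_\kappa]$ contains, of the relevant coordinates, only $g_\kappa$ and the tree $T_\kappa$ --- not the branch $b_\kappa$ and not a single coordinate of $h_\kappa$, all of which are generic over $V[G][H_\kappa]$. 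This is not repairable bookkeeping for your forcing: if $h^*$ were any $M[j(G)][H_\kappa]$-generic filter on $\Add(j(\kappa),j(\kappa^+))^{M[j(G)][H_\kappa]}$ with $j\image h_\kappa^0\of h^*$, then $\bigcup h_\kappa^0$ would be readable off $\bigcup h^*$ at the zeroth coordinate, and the ${<}j(\kappa)$-distributivity of that poset over the target model would force $h_\kappa^0\in M[j(G)][H_\kappa]\of V[G][H_\kappa]$, contradicting its genericity. So the lift you describe, whose domain must contain $h_\kappa\restrict\kappa^+$ in order to measure an arbitrary family $\mathcal{A}$, does not exist.

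The repair is to take the first step from theorems \ref{Theorem.LeastWC=LeastWM} and \ref{Theorem.WC=WM=W} rather than from theorem \ref{Theorem.WC=Unf=W}: force at inaccessible stages of $\P$ with $\Add(\gamma,\gamma^+)*\dot\S_\gamma$, so that the stage-$\kappa$ slot of $j(\P_\kappa)$ is $\Add(\kappa,\kappa^+)*\dot\S_\kappa$. Then, over $V[G_\kappa][H_\kappa]$, use lemma \ref{Lemma.Souslin+Branch=Cohen} to recombine $g_\kappa$, $T_\kappa*b_\kappa$ and $h_\kappa$ into a single $\Add(\kappa,\kappa^{++})$-generic $g^*$, apply the automorphism so that the $\kappa^+$ coordinates capturing $\mathcal{A}$ come first, and feed those first $\kappa^+$ coordinates --- followed by a tree decoded from the next coordinate --- into the stage-$\kappa$ slot of $j(\P_\kappa)$. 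Now the domain generic for the master-filter lift genuinely lies in $M[j(G)]$, and the argument of theorem \ref{Theorem.LeastWC=LeastWM} goes through. The unfoldability lift is insensitive to replacing $\Add(\gamma,1)$ by $\Add(\gamma,\gamma^+)$, exactly as in the remark following theorem \ref{Theorem.LeastWC=LeastUnf=LeastWM}, and the argument that no $\gamma\notin W$ is weakly compact is unchanged.
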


The following theorem shows the general flexibility of the situation.

\begin{theorem}\label{Theorem.WCsInWAllNearlySC}
Suppose that $W$ is a class of cardinals and $\kappa\mapsto\theta_\kappa$ is a function such that
\begin{enumerate}
 \item Every $\kappa\in W$ is nearly $\theta_\kappa$-supercompact and $\theta_{\kappa}^{{<}\kappa} = \theta_{\kappa}$.
 \item If $\kappa\in W$, then the next element of $W$ above $\kappa$ is above $\theta_\kappa$.
 \item There exists a class function $f: \text{ORD} \to \text{ORD}$ such that for every $\kappa \in W$,
 $f\image\kappa \subseteq \kappa$, and for every $A \subseteq \theta_{\kappa}$, there is a near
 $\theta_{\kappa}$-supercompactness embedding $j: M \rightarrow N$ with critical point $\kappa$ between
 $\ltkappa$-closed transitive models of $\text{ZFC}^{-}$ of size $\theta_\kappa$ such that $A, f\restrict\kappa \in M$ and
 $j(f\restrict\kappa)(\kappa) > \theta_\kappa$.
\end{enumerate}
Then there is a class forcing extension in which $W$ becomes exactly the class of weakly compact cardinals, and every $\kappa\in W$ remains nearly $\theta_{\kappa}$-supercompact, while furthermore, all cardinals in $[\kappa, \theta_{\kappa}]$ are preserved.
\end{theorem}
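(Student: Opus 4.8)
The plan is to globalize the argument of theorem \ref{Theorem.LeastWCNearlySC}, carrying out the two-step bookkeeping of theorems \ref{Theorem.WC=Unf=W} and \ref{Theorem.WC=WM=W}, with the class function $f$ of hypothesis~(3) playing the role of a global near-supercompactness Menas function. First I would define the Easton-support class iteration $\P=\langle(\P_\gamma,\dot\Q_\gamma)\mid\gamma\in\Ord\rangle$, forcing trivially except at stages $\gamma$ that are inaccessible in $V[G_\gamma]$: when $\gamma\notin W$ the stage-$\gamma$ forcing is $\Add(\gamma,f(\gamma))*\dot\S_\gamma$, adding a homogeneous $\gamma$-Souslin tree via lemma \ref{Lemma.Souslin+Branch=Cohen}, and when $\gamma\in W$ it is the pure Cohen addition $\Add(\gamma,\theta_\gamma^+)$, adding no tree. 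Each stage is strategically $\ltgamma$-closed, so the iteration is progressively closed and $V[G]\satisfies\ZFC$ by the general considerations of \cite{Reitz2006:Dissertation}, exactly as in theorem \ref{Theorem.WC=Unf=W}. Since the stage-$\kappa$ forcing depends only on $W\intersect\kappa$, $f\restrict\kappa$ and $(\gamma\mapsto\theta_\gamma)\restrict\kappa$, each of size at most $\kappa$, I can arrange through hypothesis~(3) that the relevant domain model $M$ computes $\P_\kappa$ correctly.

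Next I would verify that $W$ is exactly the class of weakly compact cardinals of $V[G]$ and that all cardinals in $[\kappa,\theta_\kappa]$ are preserved for $\kappa\in W$. The key observation, which replaces the ``no inaccessible cardinals above $\kappa$'' hypothesis used in the earlier theorems, is that the stage-$\kappa$ forcing $\Add(\kappa,\theta_\kappa^+)$ makes $2^\kappa\geq\theta_\kappa^+>\theta_\kappa$, so that no cardinal in $(\kappa,\theta_\kappa]$ is a strong limit, hence none is inaccessible, in $V[G_\kappa][g_\kappa]$ or any further extension. Consequently the iteration is trivial throughout $(\kappa,\theta_\kappa]$, the tail forcing beyond stage $\kappa$ is strategically $\leq\theta_\kappa$-closed over $V[G_\kappa][g_\kappa]$, and all cardinals in $[\kappa,\theta_\kappa]$ survive. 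This is precisely where hypothesis~(2) is needed: it guarantees $W\intersect(\kappa,\theta_\kappa]=\emptyset$, so that destroying the inaccessibility of the cardinals in $(\kappa,\theta_\kappa]$ does not harm any member of $W$. Meanwhile each inaccessible $\gamma\notin W$ that survives to stage $\gamma$ carries a $\gamma$-Souslin tree that persists through the later strategically $\leqgamma$-closed forcing and violates the tree property, so no such $\gamma$ is weakly compact; the remaining $\gamma\notin W$ fail even to be inaccessible in $V[G]$.

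The heart of the matter is to show, for a fixed $\kappa\in W$, that $\kappa$ remains nearly $\theta_\kappa$-supercompact in $V[G]$; since the tail is strategically $\leq\theta_\kappa$-closed it suffices to produce the witnessing filter inside $V[G_\kappa][g_\kappa]$, where $g_\kappa\of\Add(\kappa,\theta_\kappa^+)$. Given a $\theta_\kappa$-sized Boolean algebra $\mathcal A$ of subsets of $P_\kappa\theta_\kappa$, closed under $\ltkappa$-intersections and containing the fineness cones and the preimages of points under the members of a $\theta_\kappa$-sized family $\mathcal F$, I would use the $\kappa^+$-chain condition and an automorphism of the stage-$\kappa$ forcing to arrange $\mathcal A,\mathcal F\in V[G_\kappa][g_\kappa\restrict\theta_\kappa]$, with names $\dot{\mathcal A},\dot{\mathcal F}$ of hereditary size $\theta_\kappa$. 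Hypothesis~(3) then supplies a near $\theta_\kappa$-supercompactness embedding $j:M\to N$ with critical point $\kappa$ between $\ltkappa$-closed models of $\ZFC^-$ of size $\theta_\kappa$, with $\dot{\mathcal A},\dot{\mathcal F},f\restrict\kappa\in M$ and $\beta:=j(f\restrict\kappa)(\kappa)>\theta_\kappa$. I would factor $j(\P_\kappa)$ at stage $\kappa$ and lift, splitting as in theorem \ref{Theorem.WC=WM=W} according to whether $\kappa\in j(W)$: if $\kappa\notin j(W)$ the stage-$\kappa$ forcing is $\Add(\kappa,\beta)*\dot\S_\kappa$ and, as in theorem \ref{Theorem.LeastWCNearlySC}, I extract a $\kappa$-Souslin tree $T$ from a Cohen coordinate of $g_\kappa$ beyond $\beta$ so that $G*(g_\kappa\restrict\beta)*T$ becomes $N$-generic through stage $\kappa$; if $\kappa\in j(W)$ the stage-$\kappa$ forcing is a pure Cohen addition of more than $\theta_\kappa$ subsets of $\kappa$, whose $N$-generic is supplied directly by an initial segment of $g_\kappa$ (note $(\theta_\kappa^+)^N<(\theta_\kappa^+)^V$ since $|N|^V=\theta_\kappa$), with no tree needed. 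In either case the stage-$\kappa$ forcing again forces $2^\kappa>\theta_\kappa$ in $N$, so $\Ptail$ is strategically $\leq\theta_\kappa$-closed in the corresponding stage-$\kappa$ extension of $N$; I build $\Gtail$ by diagonalization and lift to $j:M[G_\kappa]\to N[j(G_\kappa)]$, and then use the master condition $\bigcup j\image(g_\kappa\restrict\theta_\kappa)$ for $j(\Add(\kappa,\theta_\kappa))$ to lift further to the domain $M[G_\kappa][g_\kappa\restrict\theta_\kappa]$, which contains $\mathcal A$ and $\mathcal F$.

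Finally I would set $\mu=\set{X\in\mathcal A\st j\image\theta_\kappa\in j(X)}$, which the usual supercompactness computation shows to be an $\mathcal F$-normal fine $\kappa$-complete filter measuring every set in $\mathcal A$, and argue that $\mu\in V[G]$: from a $\theta_\kappa$-enumeration of $\mathcal A$ in $M[G_\kappa][g_\kappa\restrict\theta_\kappa]$ together with $j\image\theta_\kappa\in N$ one reconstructs $j\restrict\mathcal A$, placing $\mu$ in the final lift target; and since the forcing beyond the stage-$\kappa$ extension of $N$ is strategically $\leq\theta_\kappa$-closed it cannot have added the $\theta_\kappa$-sized set $\mu$, so $\mu$ lies already in that stage-$\kappa$ extension, which is contained in $V[G_\kappa][g_\kappa]\of V[G]$. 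I expect the main obstacle to be exactly the factorization of $j(\P_\kappa)$ at stage $\kappa$: verifying the $\kappa\in j(W)$ and $\kappa\notin j(W)$ alternatives uniformly, and checking in each case that the blow-up of $2^\kappa$ past $\theta_\kappa$ clears every inaccessible of $N$ from $(\kappa,\theta_\kappa]$ so that $\Ptail$ is strategically $\leq\theta_\kappa$-closed. It is this closure, rather than any global anti-large-cardinal hypothesis, that lets the filter descend into $V[G]$, and it is the step most sensitive to the interaction of hypotheses (1)--(3).
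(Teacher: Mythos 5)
Your proposal follows essentially the same route as the paper's proof: the same Easton-support class iteration (Souslin-tree forcing $\Add(\gamma,f(\gamma))*\dot\S_\gamma$ at inaccessible $\gamma\notin W$, pure Cohen forcing $\Add(\gamma,\theta_\gamma^+)$ at $\gamma\in W$), the same observation that blowing up $2^\kappa$ past $\theta_\kappa$ clears all inaccessibles from $(\kappa,\theta_\kappa]$ so that the tail is strategically ${\leq}\theta_\kappa$-closed, the same case split on whether $\kappa\in j(W)$ when factoring $j(\P_\kappa)$, and the same master-condition and filter-descent argument. The one step to correct is your claim that $\Gtail$ can be built ``by diagonalization'': the models $M$ and $N$ supplied by hypothesis (3) are only $\ltkappa$-closed, so the strategic ${\leq}\theta_\kappa$-closure of $\Ptail$ holds only internally to the stage-$\kappa$ extension of $N$ and cannot be run externally for $\theta_\kappa$ many steps (the partial plays of length ${\geq}\kappa$ need not belong to that model). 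The paper instead genuinely forces over $V[G]$ to add $\Gtail$ and $h$, and it is precisely the descent argument you already state---$\mu$ is a $\theta_\kappa$-sized set in $N[j(G_\kappa)][h]$ that the strategically ${\leq}\theta_\kappa$-closed forcing beyond the stage-$\kappa$ extension of $N$ could not have added---that lands $\mu$ back in $V[G]$; so your argument is repaired simply by replacing the diagonalization with external forcing, after which the two proofs coincide.
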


\noindent Note that condition (3) follows if $W$ is scattered, that is, if it contains none of its limit points, and this requirement can be viewed as the existence of a sort of $W$-universal Menas function for various degrees of partial near supercompactness.  To construct such a function $f$ when $W$ is scattered, one can define $f(\gamma)$ to be $f_{\kappa}(\gamma)$ where $\kappa$ is the least cardinal in $W$ above $\gamma$ and $f_{\kappa}$ is the Menas function for the near $\theta_{\kappa}$-supercompactness of $\kappa$ defined in \cite{Schanker2011:Dissertation,Schanker2013:PartialNearSupercompactness}. More generally, the property can hold even when $W$ is not scattered, provided there is a sufficient weak reflectivity in the function $\kappa\mapsto\theta_\kappa$.

\begin{proof}
We iteratively apply the argument of theorem \ref{Theorem.LeastWCNearlySC} with a few modifications.  Let $f:\Ord\to\Ord$ be the function whose existence was posited by condition (3), and then let $\P$ be the Easton support class iteration defined as follows. At stage $\gamma$, if $\gamma\notin W$ and $\gamma$ is inaccessible in the partial extension after forcing with $\P_\gamma$, then the stage $\gamma$ forcing is $\Q_\gamma=\Add(\gamma,f(\gamma))*\dot{\S}_\gamma$. At a stage $\gamma$, if $\gamma\in W$, then the stage $\gamma$ forcing is is $\Q_\gamma=\Add(\gamma,\theta_{\gamma}^+)$.  Suppose that $G\of\P$ is $V$-generic, and consider the corresponding extension $V[G]$. Since this is progressively closed forcing in the sense of \cite{Reitz2006:Dissertation}, we have $V[G]\satisfies\ZFC$. Note that each inaccessible cardinal $\gamma$ of $V$ that's not in $W$ and remains inaccessible in $V[G]$ gains a homogeneous $\gamma$-Souslin tree in $V[G]$ and consequently is not weakly compact there.  Therefore, the only remaining weakly compact cardinals in $V[G]$ must be in $W$.  So let's fix any cardinal $\kappa \in W$ and show that it remains nearly $\theta_{\kappa}$-supercompact. Since the stage $\kappa$ forcing ensures $2^\kappa\geq\theta_\kappa^+$, there are no inaccessible cardinals in $V[G_{\kappa+1}]$ in the interval $(\kappa,\theta_\kappa]$. Consequently, since also $W\intersect(\kappa,\theta_\kappa]=\emptyset$, there are no nontrivial stages of forcing of $\P$ in that interval. Consequently, the forcing beyond stage $\kappa$ will be strategically $\leqtheta_\kappa$-closed. Thus, it suffices to show that $\kappa$ remains nearly $\theta_{\kappa}$-supercompact in $V[G_{\kappa+1}]$. For this, we appeal essentially to the lifting argument from theorem \ref{Theorem.LeastWCNearlySC}, using $\theta=\theta_\kappa$.  As in the proof of that theorem, we fix any family $\mathcal{A}$ of at most $\theta$ many subsets of $(P_\kappa\theta)^{V[G]}$ and a family $\mathcal{F}$ of at most $\theta$ many functions from $(P_\kappa\theta)^{V[G]}$ to $\theta$ and then extend $\mathcal{A}$ if necessary to obtain a $\ltkappa$-complete Boolean algebra of sets containing all the fineness cones and preimages of any point under any function in $\mathcal{F}$.  We then take names $\dot{\mathcal{A}},\dot{\mathcal{F}},\dot P \in M$ having hereditary size $\theta$ in $V$ for $\mathcal{A}$, $\mathcal{F}$, and $(P_{\kappa}\theta)^{V[G]}$, respectively, and code it along with $f_{\kappa} = f\restrict\kappa$ as a single subset of $\theta$.  By condition (3), we may find a near $\theta$-supercompactness embedding $j:M\to N$ with critical point $\kappa$ between transitive $\text{ZFC}^-$ models of size $\theta$ such that the coding subset is in $M$ and $j(f_{\kappa})(\kappa) > \theta$.  Our hypotheses ensure that $\theta_\delta<\kappa$ for every $\delta\in W$ below $\kappa$, and so the initial segments of the forcing $\P_{\gamma}$ for $\gamma<\kappa$ are in $V_\kappa$. Therefore, because $N$ has size $\theta$ in $V$, we may factor $j(\P)$ as $\P*\Add(\kappa,\beta)*\Ptail$ or $\P*\Add(\kappa,\beta)*\S_\kappa*\Ptail$ for some $\beta \in [\theta, (\theta^+)^{V})$, depending on whether or not $\kappa \in j(W)$, where $\Ptail$ is the forcing beyond stage $\kappa$.  With this, we can now proceed as in the proof of theorem \ref{Theorem.LeastWCNearlySC} to prove that $\kappa$ remains nearly $\theta$-supercompact in $V[G_{\kappa+1}]$, which is an extension analogous to the $V[G][g]$ from that theorem.  Finally, because $\P_{\kappa+1}$ is $\kappa^+$-c.c. and the remaining forcing is strategically $\leqtheta_\kappa$-closed, no cardinals from $\kappa$ to $\theta_{\kappa}$ are collapsed.
\end{proof}

\begin{corollary}\label{Corollary.AltIdentityCrisis}
 There is a forcing extension of the universe, preserving all nearly $\kappa^+$-supercompact cardinals $\kappa$, in which the class of nearly $\kappa^+$-supercompact cardinals $\kappa$ coincides with the class of weakly compact cardinals.  In particular, in this model the class of weakly measurable cardinals also coincides with the class of weakly compact cardinals.
\end{corollary}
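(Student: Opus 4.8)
The plan is to derive the corollary directly from Theorem~\ref{Theorem.WCsInWAllNearlySC}, applied to the class $W=\set{\kappa\st\kappa\text{ is nearly }\kappa^+\text{-supercompact}}$ together with the function $\kappa\mapsto\theta_\kappa=\kappa^+$. First I would check hypotheses (1) and (2), which are essentially immediate. Every $\kappa\in W$ is nearly $\kappa^+$-supercompact by definition, and since every nearly $\kappa^+$-supercompact cardinal is weakly measurable and hence inaccessible, we have $\kappa^\ltkappa=\kappa$, from which a routine computation gives $\theta_\kappa^\ltkappa=(\kappa^+)^\ltkappa=\kappa^+=\theta_\kappa$; this is (1). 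For (2), the next element $\kappa'$ of $W$ above $\kappa$ is again inaccessible and therefore a limit cardinal, so $\kappa'>\kappa^+=\theta_\kappa$, as required.

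The substance of the proof lies in verifying hypothesis (3), the existence of a single class function $f\colon\Ord\to\Ord$ serving as a simultaneous Menas function for near $\kappa^+$-supercompactness at every $\kappa\in W$, with $f\image\kappa\of\kappa$ and $j(f\restrict\kappa)(\kappa)>\kappa^+$ along suitable embeddings. Here I cannot simply invoke the scattered construction described after the theorem, since $W$ need not be scattered: a supercompact cardinal $\kappa$ is nearly $\kappa^+$-supercompact and, by reflecting this property through a supercompactness measure, is a limit of nearly $\gamma^+$-supercompact cardinals, so it lies in $W$ as a limit point of $W$. Instead I would exploit the weak reflectivity of the definable map $\kappa\mapsto\kappa^+$ alluded to in the remark. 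Concretely, I would define $f$ uniformly as a measure of near supercompactness, letting $f(\gamma)$ be the least $\beta$ for which $\gamma$ fails to be nearly $\beta$-supercompact whenever $\gamma$ is inaccessible and nearly $\gamma^+$-supercompact, and $f(\gamma)=0$ otherwise, subject to a definable truncation chosen so as to guarantee $f\image\kappa\of\kappa$.

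The hard part, and the main obstacle, will be to confirm that this uniform $f$ still satisfies $j(f\restrict\kappa)(\kappa)>\kappa^+$ at every $\kappa\in W$, including those $\kappa$ that are limit points of $W$, where the truncation cannot be arranged simply by capping at the next element of $W$. The key point I would use is that a near $\kappa^+$-supercompactness embedding $j\colon M\to N$ reflects near $\kappa^+$-supercompactness of $\kappa$ into its target: since $N$ has size $\kappa^+$ and contains $j\image\kappa^+$, one can reconstruct $j\restrict\mathcal{A}$ for the relevant size-$\kappa^+$ families inside $N$, exactly as in the proof of Theorem~\ref{Theorem.LeastWCNearlySC}, so that $N$ sees $\kappa$ as nearly $\kappa^+$-supercompact and hence computes $j(f\restrict\kappa)(\kappa)>(\kappa^+)^N=\kappa^+$. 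At the same time, the size of $N$ limits the near supercompactness it can witness at $\kappa$, and the delicate verification is that this keeps the least-failure value strictly below the truncation threshold, so that the value is not collapsed to $0$. Since $f$ is definable and $\kappa^+$ is computed correctly by these targets, this reflection is precisely the weak reflectivity of $\kappa\mapsto\kappa^+$ demanded by (3).

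Granting (3), Theorem~\ref{Theorem.WCsInWAllNearlySC} produces a class forcing extension in which $W$ is exactly the class of weakly compact cardinals, every $\kappa\in W$ remains nearly $\kappa^+$-supercompact, and all cardinals in $[\kappa,\kappa^+]$ are preserved. Thus every nearly $\kappa^+$-supercompact cardinal of $V$ is preserved as such, and no new ones are created, since a nearly $\kappa^+$-supercompact cardinal is in particular weakly compact and the weakly compact cardinals of the extension are exactly $W$; so the class of nearly $\kappa^+$-supercompact cardinals coincides with the class of weakly compact cardinals. Finally, for the \emph{in particular} clause I would observe the chain of implications that, at any cardinal, being nearly $\kappa^+$-supercompact implies weak measurability (as near $\kappa^+$-supercompactness subsumes $\kappa^+$-nearly $\kappa$-supercompactness, which is weak measurability), and weak measurability implies weak compactness. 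In the extension these two outer classes coincide, so the class of weakly measurable cardinals is squeezed between them and likewise coincides with the class of weakly compact cardinals.
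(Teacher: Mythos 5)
Your overall framing is the same as the paper's: apply Theorem~\ref{Theorem.WCsInWAllNearlySC} to $W=\set{\kappa\st\kappa\text{ is nearly }\kappa^+\text{-supercompact}}$ with $\theta_\kappa=\kappa^+$, check (1) and (2) (which you do correctly), and then verify (3); your closing paragraph deducing the two coincidences of classes is also fine. The genuine gap is in your verification of (3). You define $f(\gamma)$ as the least $\beta$ at which near $\beta$-supercompactness of $\gamma$ fails, ``subject to a definable truncation'' that you never specify, and you yourself flag the resulting obligation --- that $j(f\restrict\kappa)(\kappa)>\kappa^+$ survives the truncation --- as ``the hard part'' without discharging it. The reflection argument you sketch does not do the job: the existence of one near $\kappa^+$-supercompactness embedding $j:M\to N$ does not make $N$ \emph{satisfy} ``$\kappa$ is nearly $\kappa^+$-supercompact,'' since $N$ would need witnessing embeddings for every one of \emph{its own} subsets of $\kappa^+$, not just for the families coded into $M$; and without control of where the least failure lands relative to your unspecified truncation threshold, elementarity could return $j(f\restrict\kappa)(\kappa)=0$. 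Note also that for $\gamma\in W$ your untruncated $f(\gamma)$ already exceeds $\gamma^+$ and can exceed the next element of $W$ (e.g.\ when $\gamma$ is supercompact), so the truncation is doing real work precisely at the limit points of $W$ that you correctly identify as the problematic case.

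The paper sidesteps all of this with a much simpler choice: $f(\alpha)=\alpha^++1$. Then $j(f\restrict\kappa)(\kappa)=(\kappa^+)^N+1$, and the only thing to arrange is that $M$ (hence $N$) computes cardinal successors correctly; this is done by folding bijections from $|\gamma|$ onto $\gamma$ for each $\gamma<\kappa^+$ into the subset of $\theta$ that the near $\kappa^+$-supercompactness embedding is required to capture, so that $(\kappa^+)^N=\kappa^+$ and $j(f\restrict\kappa)(\kappa)>\kappa^+$. This works uniformly whether or not $W$ is scattered, requires no truncation, and avoids any appeal to $N$ recognizing large cardinal properties of $\kappa$. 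I suggest you replace your ``least failure'' function with this one.
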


\begin{proof}
 Simply let $W$ be the class of nearly $\kappa^+$-supercompact cardinals $\kappa$ and let $\theta_\kappa=\kappa^+$. Requirements (1) and (2) of theorem \ref{Theorem.WCsInWAllNearlySC} are immediate.  For requirement (3), define $f$ by $\alpha \mapsto \alpha^{+}+1$.  Then for every $A\subseteq\kappa^+$, we can find a near $\kappa^+$-supercompactness embedding $j: M\to N$ between $\ltkappa$-closed transitive models of $\text{ZFC}^{-}$ of size $\theta$ with critical point $\kappa$ where $A \in M$ and $j(f)(\kappa) > \kappa^+$ by making sure our $M$ has a bijection from $|\gamma|$ onto $\gamma$ for each $\nobreak{\gamma < \kappa^+}$ by using a subset of $\theta$ also coding these bijections.
\end{proof}

A similar argument applies to the case of near $\kappa^{++}$-supercompactness and so on. Extending this, let us
say that a cardinal $\kappa$ is nearly supercompact to a weakly inaccessible degree, if it is nearly
$\theta$-supercompact for some weakly inaccessible cardinal $\theta > \kappa$ for which $\theta^{\ltkappa} =
\theta$.  Then we may introduce the following corollary.

\begin{corollary}
 There is a forcing extension of the universe, preserving every cardinal that is nearly supercompact to a weakly inaccessible degree, in which the class of cardinals that are nearly supercompact to a weakly inaccessible degree coincides with the class of weakly compact cardinals.
\end{corollary}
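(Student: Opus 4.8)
The plan is to apply Theorem~\ref{Theorem.WCsInWAllNearlySC} with $W$ taken to be the class of all cardinals that are nearly supercompact to a weakly inaccessible degree, and with $\theta_\kappa$ defined, for each $\kappa\in W$, to be the least weakly inaccessible cardinal $\theta>\kappa$ for which $\theta^{\ltkappa}=\theta$. First I would verify condition~(1). Since $\kappa\in W$, it is nearly $\theta$-supercompact for some weakly inaccessible $\theta>\kappa$ with $\theta^{\ltkappa}=\theta$, and this $\theta$ is itself a candidate, so $\theta_\kappa\le\theta$; the monotonicity of near supercompactness established by Schanker \cite{Schanker2011:Dissertation,Schanker2013:PartialNearSupercompactness} then gives that $\kappa$ is also nearly $\theta_\kappa$-supercompact, while $\theta_\kappa^{\ltkappa}=\theta_\kappa$ holds by the choice of $\theta_\kappa$. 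Note that this definition requires no appeal to the \GCH, since any inaccessible cardinal above $\kappa$ automatically satisfies the relevant power condition.

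The key step is condition~(2), that the next element of $W$ above $\kappa$ lies strictly above $\theta_\kappa$, and this is where the corollary genuinely has content. The crucial observation is that every $\kappa\in W$ is weakly compact (near $\theta_\kappa$-supercompactness implies weak compactness), hence Mahlo, hence a stationary limit of inaccessible cardinals. I would argue by contradiction: suppose $\mu\in W$ with $\kappa<\mu\le\theta_\kappa$. Then $\mu$ is a limit of inaccessible cardinals, so there is an inaccessible $\nu$ with $\kappa<\nu<\mu$. Being inaccessible and above $\kappa$, this $\nu$ is weakly inaccessible and satisfies $\nu^{\ltkappa}=\nu$; but then $\nu$ is a weakly inaccessible cardinal above $\kappa$ with $\nu^{\ltkappa}=\nu$ and $\nu<\mu\le\theta_\kappa$, contradicting the minimality of $\theta_\kappa$. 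Consequently $W\cap(\kappa,\theta_\kappa]=\emptyset$, which is exactly condition~(2); in particular $\theta_\kappa$ itself is never an element of $W$.

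For condition~(3) I would produce the required $W$-universal Menas function. When $W$ is scattered this is routine, exactly as in the remark following Theorem~\ref{Theorem.WCsInWAllNearlySC} and in the proof of Corollary~\ref{Corollary.AltIdentityCrisis}: set $f(\gamma)=f_\kappa(\gamma)$, where $\kappa$ is the least element of $W$ above $\gamma$ and $f_\kappa\colon\kappa\to\kappa$ is Schanker's Menas function for the near $\theta_\kappa$-supercompactness of $\kappa$. This yields $f\image\kappa\subseteq\kappa$, and since $j(f\restrict\kappa)(\kappa)$ depends only on the behavior of $f\restrict\kappa$ on a final segment below $\kappa$, where $f$ agrees with $f_\kappa$, one obtains an embedding with $j(f\restrict\kappa)(\kappa)>\theta_\kappa$. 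For a general $W$ that may contain some of its limit points, the same recipe works at each $\kappa\in W$ that is not a limit of $W$, and at the limit points one falls back on the \emph{weak reflectivity} already flagged after Theorem~\ref{Theorem.WCsInWAllNearlySC}, assembling the $f_\kappa$'s into a single global ordinal-anticipating Menas function by the methods of \cite{Schanker2011:Dissertation,Schanker2013:PartialNearSupercompactness}. This is the most technical part of the argument, but it introduces no idea beyond the cited work.

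Finally I would invoke Theorem~\ref{Theorem.WCsInWAllNearlySC} to obtain a class forcing extension in which the weakly compact cardinals are exactly $W$, every $\kappa\in W$ remains nearly $\theta_\kappa$-supercompact, and all cardinals in $[\kappa,\theta_\kappa]$ are preserved. It then remains only to observe that in this extension the class of cardinals nearly supercompact to a weakly inaccessible degree again coincides with $W$: on the one hand each $\kappa\in W$ witnesses this with $\theta_\kappa$, since $\theta_\kappa$ remains weakly inaccessible (it is still a regular limit cardinal, as cardinals in $[\kappa,\theta_\kappa]$ are preserved) and still satisfies $\theta_\kappa^{\ltkappa}=\theta_\kappa$ (because $\kappa$ remains inaccessible, hence a strong limit, so $2^\lambda<\kappa<\theta_\kappa$ for every $\lambda<\kappa$, and $\theta_\kappa$ is regular); on the other hand, any such cardinal is in particular weakly compact, hence in $W$. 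I expect condition~(2) to be the main obstacle, in the sense that its success is not obvious a priori and is precisely what the Mahlo-reflection observation supplies, whereas condition~(3), though fussier at the limit points of $W$, merely follows the established pattern.
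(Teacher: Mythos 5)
Your handling of conditions (1) and (2), and your closing verification that $\theta_\kappa$ remains weakly inaccessible with $\theta_\kappa^{\ltkappa}=\theta_\kappa$ in the extension, match the paper's proof: in particular your Mahlo-reflection argument for condition (2) is exactly the paper's observation that every $\kappa\in W$ is a limit of inaccessible cardinals, so no inaccessible --- hence no element of $W$ --- can lie in $(\kappa,\theta_\kappa]$ without violating the minimality of $\theta_\kappa$.

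The gap is in condition (3). The class $W$ of all cardinals nearly supercompact to a weakly inaccessible degree need not be scattered (a supercompact cardinal, for instance, is typically a limit of members of $W$ and lies in $W$ itself), so the ``least element of $W$ above $\gamma$'' recipe from the remark following Theorem~\ref{Theorem.WCsInWAllNearlySC} and from Corollary~\ref{Corollary.AltIdentityCrisis} does not suffice, and your deferral of the limit-point case to an unspecified ``weak reflectivity'' and to ``the methods of'' the cited work is precisely the step that needs an argument rather than a citation --- nothing in the cited material assembles local Menas functions across a non-scattered class. The paper closes this gap with a single uniform definition that exploits the particular form of $\theta_\kappa$: let $f(\gamma)$ be one more than the least strongly inaccessible cardinal above $\gamma$. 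Then $f\image\kappa\of\kappa$ because each $\kappa\in W$ is a limit of inaccessibles, and for the Menas property one codes into $A\of\theta_\kappa$ bijections witnessing $|\gamma|$ for all $\gamma<\theta_\kappa$, so that $N$, using $j\image\theta_\kappa$, recognizes $\theta_\kappa$ as the least weakly inaccessible cardinal above $\kappa$ with $\theta_\kappa^{\ltkappa}=\theta_\kappa$; consequently $N$ has no strongly inaccessible cardinals in $(\kappa,\theta_\kappa)$ and $j(f\restrict\kappa)(\kappa)>\theta_\kappa$. You should replace your construction of $f$ with this one (or supply an actual construction for the non-scattered case); as written, your proof establishes the corollary only under the additional, and here unwarranted, assumption that $W$ is scattered.
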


\begin{proof}
Let $W$ be the class of cardinals $\kappa$ that are nearly supercompact to a weakly inaccessible degree, and let $\theta_\kappa$ be the least weakly inaccessible cardinal above $\kappa$ for which $\theta_\kappa^{\ltkappa}=\theta_\kappa$; thus, $\kappa$ is nearly $\theta_\kappa$-supercompact and we fulfill condition (1) of theorem \ref{Theorem.WCsAllNearlySC}. Condition (2) is satisfied by the fact that every $\kappa\in W$ is inaccessible and a limit of inaccessible cardinals. For condition (3), define $f$ by the map that takes every ordinal $\gamma$ to one more than the least (strongly) inaccessible cardinal above it.  Code a bijection from $|\gamma|$ onto $\gamma$ for each $\nobreak{\gamma < \theta_{\kappa}}$ with our desired subset of $\theta_\kappa$ as a single $A \of\theta_\kappa$. Then we can find a near $\theta_{\kappa}$-supercompactness embedding $j: M\to N$ between $\ltkappa$-closed transitive models of $\text{ZFC}^{-}$ of size $\theta_\kappa$ with critical point $\kappa$ where $A \in M$.  Then by use of $j\image\theta_{\kappa}$, $N$ knows that $\theta_{\kappa}$ is the least weakly inaccessible cardinal above $\kappa$ such that $\theta_{\kappa}^{\ltkappa} = \theta_{\kappa}$ and so $j(f)(\kappa) > \theta_\kappa$, verifying condition (3).  By theorem \ref{Theorem.WCsAllNearlySC}, therefore, there is a forcing extension in which $W$ becomes the class of weakly compact cardinals, and every $\kappa\in W$ remains nearly $\theta_\kappa$-supercompact. Furthermore, an analysis of the forcing shows that each $\theta_\kappa$ remains weakly inaccessible in the extension, and so in that extension every weakly compact cardinal is nearly supercompact to a weakly inaccessible degree, as desired.
\end{proof}

The following corollary illustrates the extremely flexible nature of the argument. The hypothesis here is stronger than necessary, in light of theorem \ref{Theorem.WCsInWAllNearlySC}, but this allows for a simple statement of the result.

\begin{corollary}\label{Theorem.WCsAllNearlySC}
Suppose that there is a proper class of supercompact cardinals and $\kappa\mapsto\theta_\kappa$ is any proper class function. Then there is a forcing extension with a proper class of weakly compact cardinals $\kappa$, each of them nearly $\theta_\kappa$-supercompact.  Moreover, the forcing preserves all cardinals in the interval $[\kappa,\theta_\kappa]$, for any weakly compact cardinal $\kappa$ of the extension.
\end{corollary}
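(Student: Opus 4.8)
The plan is to derive this corollary directly from theorem \ref{Theorem.WCsInWAllNearlySC} by manufacturing, from the given data, a scattered class $W$ of supercompact cardinals together with a degree function meeting the three hypotheses of that theorem. First I would normalize the given function: replacing $\theta_\kappa$ by $\max(\theta_\kappa,\kappa)$ costs nothing toward the conclusion, so I may assume $\theta_\kappa\geq\kappa$, and then I would set $\bar\theta_\kappa=\theta_\kappa^{{<}\kappa}$ in order to secure the arithmetic requirement of the theorem. Since $\kappa$ is regular one has $(\theta_\kappa^{{<}\kappa})^{{<}\kappa}=\theta_\kappa^{{<}\kappa}$, so $\bar\theta_\kappa^{{<}\kappa}=\bar\theta_\kappa$, and because every supercompact cardinal is $\lambda$-supercompact, hence nearly $\lambda$-supercompact, for every $\lambda\geq\kappa$, each supercompact cardinal is in particular nearly $\bar\theta_\kappa$-supercompact.

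Next I would thin the proper class of supercompact cardinals to a scattered subclass. Using that the supercompacts are unbounded in the ordinals, define $W=\set{\kappa_\alpha\st\alpha\in\Ord}$ by recursion: take $\kappa_0$ least supercompact, take $\kappa_{\alpha+1}$ least supercompact strictly above $\bar\theta_{\kappa_\alpha}$, and at limits take $\kappa_\lambda$ least supercompact strictly above $\sup_{\alpha<\lambda}\kappa_\alpha$. This makes $W$ a proper class whose only limit points are the suprema $\sup_{\alpha<\lambda}\kappa_\alpha$ at limit stages, none of which lie in $W$; thus $W$ is scattered. I would then check the hypotheses of theorem \ref{Theorem.WCsInWAllNearlySC} for $W$ with the degree function $\kappa_\alpha\mapsto\bar\theta_{\kappa_\alpha}$: condition (1) holds since each $\kappa_\alpha$ is nearly $\bar\theta_{\kappa_\alpha}$-supercompact and $\bar\theta_{\kappa_\alpha}^{{<}\kappa_\alpha}=\bar\theta_{\kappa_\alpha}$; condition (2) holds since the next member of $W$ above $\kappa_\alpha$ is $\kappa_{\alpha+1}>\bar\theta_{\kappa_\alpha}$; and condition (3) follows automatically from the scatteredness of $W$, exactly as remarked immediately after the statement of theorem \ref{Theorem.WCsInWAllNearlySC}.

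Applying theorem \ref{Theorem.WCsInWAllNearlySC} then yields a class forcing extension in which $W$ is precisely the class of weakly compact cardinals, each $\kappa_\alpha$ remains nearly $\bar\theta_{\kappa_\alpha}$-supercompact, and all cardinals in $[\kappa_\alpha,\bar\theta_{\kappa_\alpha}]$ are preserved. To land on the stated conclusion I would finally invoke the downward monotonicity of near supercompactness in its target parameter: if $\kappa$ is nearly $\bar\theta$-supercompact and $\kappa\leq\theta\leq\bar\theta$, then $\kappa$ is nearly $\theta$-supercompact, because for any $A\of\theta\of\bar\theta$ a near $\bar\theta$-supercompactness embedding $j:M\to N$ (which automatically has $\theta\in M$ by transitivity, as $\bar\theta\in M$) also witnesses near $\theta$-supercompactness: one has $\theta<\bar\theta<j(\kappa)$ and $j\image\theta=j\image\bar\theta\intersect j(\theta)\in N$. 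Hence every $\kappa_\alpha$ is nearly $\theta_{\kappa_\alpha}$-supercompact in the extension, and the preservation of the cardinals in $[\kappa_\alpha,\theta_{\kappa_\alpha}]\of[\kappa_\alpha,\bar\theta_{\kappa_\alpha}]$ is then immediate.

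The genuinely substantive point, as opposed to the bookkeeping of constructing $W$, is this last downward monotonicity, which is what allows me to replace the convenient degree $\bar\theta_\kappa$---chosen precisely so that $\bar\theta_\kappa^{{<}\kappa}=\bar\theta_\kappa$ and so that the recursion keeps $W$ scattered---by the arbitrary prescribed degree $\theta_\kappa$. I expect no serious obstacle beyond verifying that claim carefully and confirming that the recursion genuinely produces a scattered proper class to which theorem \ref{Theorem.WCsInWAllNearlySC} literally applies.
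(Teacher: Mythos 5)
Your proposal is correct and follows essentially the same route as the paper: replace $\theta_\kappa$ by $\theta_\kappa^{\ltkappa}$, thin the supercompact cardinals to a scattered class $W$ by the obvious recursion, and apply theorem \ref{Theorem.WCsInWAllNearlySC}. The only difference is that you make explicit two points the paper leaves tacit, namely the verification of hypotheses (1)--(3) and the downward monotonicity of near $\theta$-supercompactness in $\theta$, both of which you justify correctly.
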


\begin{proof}
Fix any class function $\kappa\mapsto\theta_\kappa$. We shall apply theorem \ref{Theorem.WCsInWAllNearlySC}, but using the slightly revised function $\kappa\mapsto\theta_\kappa^{\ltkappa}$. We define the class $W$ by recursion, placing a supercompact cardinal $\kappa$ into $W$ if it is strictly bigger than the supremum of $\theta_\delta$ for all $\delta<\kappa$ previously placed into $W$. Thus, $W$ is scattered proper class of supercompact cardinals, and we have fulfilled the conditions of theorem \ref{Theorem.WCsInWAllNearlySC}.
\end{proof}

We can allow that $W$ contains some of its limit points, when the function $\kappa\mapsto\theta_\kappa$ is sufficiently reflective.

\bibliographystyle{alpha}
\bibliography{HamkinsBiblio,MathBiblio}

\end{document}